\numberwithin{equation}{section}
\numberwithin{figure}{section}
\theoremstyle{plain}
\newtheorem{thm}{\protect\theoremname}[section]
  \theoremstyle{plain}
  \newtheorem{prop}[thm]{\protect\propositionname}
  \theoremstyle{remark}
  \newtheorem{rem}[thm]{\protect\remarkname}
  \theoremstyle{plain}
  \newtheorem{lem}[thm]{\protect\lemmaname}
  \theoremstyle{plain}
  \newtheorem{cor}[thm]{\protect\corollaryname}
  \theoremstyle{definition}
  \newtheorem{example}[thm]{\protect\examplename}
 \newlist{casenv}{enumerate}{4}
 \setlist[casenv]{leftmargin=*,align=left,widest={iiii}}
 \setlist[casenv,1]{label={{\itshape\ \casename} \arabic*.},ref=\arabic*}
 \setlist[casenv,2]{label={{\itshape\ \casename} \roman*.},ref=\roman*}
 \setlist[casenv,3]{label={{\itshape\ \casename\ \alph*.}},ref=\alph*}
 \setlist[casenv,4]{label={{\itshape\ \casename} \arabic*.},ref=\arabic*}
\newcommand*{\ddb}{\frac{\sqrt{-1}}{2\pi}\partial \bar{\partial}}
\newcommand*{\wifi}{\varphi}
\newcommand*{\pp}{\partial}
\date{}
  \providecommand{\corollaryname}{Corollary}
  \providecommand{\examplename}{Example}
  \providecommand{\lemmaname}{Lemma}
  \providecommand{\propositionname}{Proposition}
  \providecommand{\remarkname}{Remark}
 \providecommand{\casename}{Case}
\providecommand{\theoremname}{Theorem}
\begin{document}

\title{The J-flow On Toric Manifolds }

\author{Yi Yao}
\begin{abstract}
We study the J-flow on the toric manifolds, through study the transition
map between the moment maps induced by two K\"{a}hler metrics, which is
a diffeomorphism between polytopes. This is similar to the work of
Fang-Lai, under the assumption of Calabi symmetry, they study the
monotone map between two intervals. We get a partial bound of the
derivatives of transition map.
\end{abstract}
\maketitle

\section{Introduction}

In \cite{moment map}, Donaldson described the various situations
where the diffeomorphism groups act on some spaces of maps between
manifolds, these actions induce the moment maps, then various geometric
flows arise as the gradient flow of the norm square of moment maps,
and the J-flow is one of these. Moreover, in the study of the K-energy,
Chen \cite{lower bound} introduce J-flow as the gradient flow of
the J-functional. Let $X$ be a K\"{a}hler manifold with a K\"{a}hler class
$[\omega]$, the space of K\"{a}hler potentials is
\[
\mathcal{H}=\{\varphi\mid\omega_{\varphi}=\omega+\sqrt{-1}\partial\bar{\partial}\varphi>0\}
\]
Let $\alpha$ be a K\"{a}hler metric, the J-functional is defined on $\mathcal{H}$
by
\[
\mathcal{J}_{\alpha,\omega}(\varphi)=\int_{0}^{1}\int_{X}\dot{\varphi_{t}}(\alpha\wedge\omega_{\varphi_{t}}^{n-1}-c\omega_{\varphi_{t}}^{n})\frac{dt}{(n-1)!}
\]
where $\{\varphi_{t}\}_{0\leq t\leq1}$ be any smooth path in $\mathcal{H}$
from $0$ to $\varphi$, and $c=\frac{\int\omega^{n-1}\wedge\alpha}{\int\omega^{n}}$,
so $\mathcal{J}_{\alpha,\omega}(\varphi)=\mathcal{J}_{\alpha,\omega}(\varphi+a)$.
The critical point $\varphi$ of $\mathcal{J}_{\alpha,\omega}$ should
satisfy the Donaldson's equation
\begin{equation}
c\omega_{\varphi}^{n}=\alpha\wedge\omega_{\varphi}^{n-1}\label{eq:donaldson equation}
\end{equation}
And the J-flow is
\begin{equation}
\begin{cases}
\frac{\partial\varphi}{\partial t}=nc-\frac{n\omega_{\varphi}^{n-1}\wedge\alpha}{\omega_{\varphi}^{n}}\\
\varphi|_{t=0}=\varphi_{0}
\end{cases}\label{eq:J-flow}
\end{equation}

Chen \cite{parabolic flow} showed the long time existence of J-flow
and the convergence to solution of (\ref{eq:donaldson equation})
when $\alpha$ have non-negative bisectional curvature. Then in the
work of Song-Weinkove \cite{SW}, by a more delicate estimate based
on the previous work of Weinkove \cite{W1,W2}, a necessary and sufficient
condition for convergence is found,

J-flow (\ref{eq:J-flow}) converges to the solution of (\ref{eq:donaldson equation})
if and only if there exists a metric $\omega'\in[\omega]$ such that
the $(n-1,n-1)$ type form

\begin{equation}
nc\omega'^{n-1}-(n-1)\omega'^{n-2}\wedge\alpha>0\label{eq:song weinkove condition}
\end{equation}
This condition almost assume that there exists a subsolution of (\ref{eq:donaldson equation}).
However it is hard to check for concrete example. In particular, it
is hard to see from this condition that whether the convergence depends
on the choice of $\alpha$ in its class $[\alpha]$, in other words,
if the solvability of (\ref{eq:donaldson equation}) only depends
on the class $[\omega]$ and $[\alpha]$.

In \cite{STABILITY}, Lejmi and Sz\'{e}kelyhidi study the solvability
of (\ref{eq:donaldson equation}) from the view of geometric stability,
as the problem of the existence of cscK metrics, that is conjectured
be equivalent to the K-stability of manifold. Let $L$ be a line bundle
on $X$, $[\omega]=c_{1}(L)$, for a test-configuration $\chi$ for
$(X,L)$, they define an invariant $F_{\alpha}(\chi)$ which is similar
to the Donaldson-Futaki invariant when we study the K\"{a}hler-Einstein
metrics with conical singularity. It is proved that if (\ref{eq:donaldson equation})
have a solution then $F_{\alpha}(\chi)>0$ for any test-configuration
$\chi$ with positive norm, this is corresponding to the result in
\cite{filtration} which is for the cscK metrics. In particular, when
$\chi$ is coming from the deformation to the normal cone of a subvariety
(see \cite{ross an obsruction}), the corresponding condition $F_{\alpha}(\chi)>0$
is

For all $p$-dimensional subvariety $V$ of $X$, where $p=1,2,\cdots,n-1$,
we have
\begin{equation}
nc\int_{V}\frac{\omega^{p}}{p!}>\int_{V}\frac{\omega^{p-1}}{(p-1)!}\wedge\alpha\label{eq:gabor 's condition}
\end{equation}
Obviously these conditions only depend on the classes. They also conjecture
that (\ref{eq:gabor 's condition}) would be sufficient condition
for (\ref{eq:donaldson equation}) have solution. Moreover, (\ref{eq:gabor 's condition})
can be derived directly, suppose (\ref{eq:donaldson equation}) have
solution $\varphi$, for smooth point $x\in V$, choose coordinate
$z^{i}$ such that $V=\{z^{p+1}=\cdots=z^{n}=0\}$ near $x$, and
$\omega_{\varphi,i\bar{j}}=\delta_{i\bar{j}}$ at $x$, since
\[
p\omega_{\varphi,V}^{p-1}\wedge\alpha_{V}=tr_{\omega_{\varphi,V}}\alpha_{V}\ \omega_{\varphi,V}^{p}
\]
where $\alpha_{V}$ is the restriction of $\alpha$ on $V$. The trace
$tr_{\omega_{\varphi,V}}\alpha_{V}=\sum_{i\leq p}\alpha_{i\bar{i}}<\sum\alpha_{i\bar{i}}=nc$,
so $nc\omega_{\varphi,V}^{p}-p\omega_{\varphi,V}^{p-1}\wedge\alpha_{V}>0$
at $x$, then integrate it over $V_{reg}$ is (\ref{eq:gabor 's condition}).
In the same way, we see $nc\omega_{\varphi}-\alpha>0$, so on the
class level $nc[\omega]-[\alpha]>0$. When $n=2$, by (\ref{eq:song weinkove condition})
this is a necessary and sufficient condition for solving (\ref{eq:donaldson equation}),
but when $n>2$ it is not sufficient, see counter-example in \cite{STABILITY}.

When $n=2$, Donaldson \cite{moment map} noted that the above condition
$nc[\omega]-[\alpha]>0$ is satisfied for all K\"{a}hler classes if there
not exist curves with negative self-intersection, and conjectured
that if this condition is violated, the flow (\ref{eq:J-flow}) will
blow up over these curves.

In \cite{SW}, they confirm the above conjecture in a partial sense.
More recently, in \cite{FLSW,newest}, they consider the situations
where $nc[\omega]-[\alpha]\geq0$ or $\alpha$ degenerate along a
divisor, it is proved that the flow will converge outside a union
set of curves. The argument heavily depends on the fact that when
$n=2$ (\ref{eq:donaldson equation}) can be transformed to the Monge-AmpšŠre
equation, and the latter has continuous solution in the degenerate
case, due to the work of Eyssidieux-Guedj-Zeriahi.

For the concrete example, Fang and Lai \cite{ansatz} study the long
time behavior of J-flow on the projective bundles $X_{m,n}=\mathbb{P}(\mathcal{O}_{\mathbb{P}^{n}}\oplus\mathcal{O}_{\mathbb{P}^{n}}(-1)^{\oplus(m+1)})$,
$X_{0,n}$ is the $\mathbb{P}^{n+1}$ blow-up one point. Under the
Calabi symmetry assumption, the flow can be described by a time-dependent
monotone map between two intervals, through solve the static equation
(ODE), they see the flow always converges to a K\"{a}hler current and
on its smooth region satisfies (\ref{eq:donaldson equation}).

In this paper, we study J-flow on the toric manifolds and assume the
metrics are invariant under torus action. We expect to find more verifiable
conditions which can ensure the flow converges, maybe (\ref{eq:gabor 's condition})
for the invariant subvarieties or some combinatorial conditions for
polytopes. If the flow dose not converge, we also want to understand
its asymptotic behavior.

After the symmetry reduction, (\ref{eq:J-flow}) can be defined on
$\mathbb{R}^{n}$ by

\[
\frac{\partial\phi_{t}}{\partial t}=nc-\sum_{i,j}f_{ij}\phi_{t}^{ij}
\]
where $\phi_{t}$ and $f$ are potentials for $\omega_{\varphi_{t}}$
and $\alpha$ respectively, they conform the asymptotic behavior at
infinity assigned by the polytope $\mathcal{P}$ and $\mathcal{Q}$.
Through the Legendre transform of $\phi_{t}$, this nonlinear equation
be transformed to a quasilinear one which is defined on $\mathcal{P}$,
\begin{equation}
\frac{\partial u_{t}}{\partial t}=\sum_{i,j}f_{ij}(\nabla u_{t})u_{ij}-nc\label{eq:intro quasi-linear}
\end{equation}
$u_{t}$ is the Legendre transform of $\phi_{t}$, satisfies Guillemin's
boundary condition, so $\nabla u_{t}$ blow up near the boundary,
since $[f_{ij}]$ degenerate at infinity, so (\ref{eq:intro quasi-linear})
is degenerate on the boundary, the RHS is even not defined on the
boundary. Moreover, when the flow dose not converge, by the example
\ref{in-,-they  example Fang and Lai}, we see $\nabla u_{t}$ may
blow up in a whole domain located in $\mathcal{P}$ as $t\rightarrow\infty$.

We turn to study the transition map $U_{t}=\nabla f\circ\nabla u_{t}$
between the moment maps induced by $\omega_{\varphi_{t}}$ and $\alpha$,
it is a diffeomorphism between polytopes and map the face to face.
The price is $U_{t}$ satisfies a degenerate parabolic system (\ref{eq:Evolute equation parabolic}).
The static map satisfies
\[
trDU\equiv nc
\]
In the paper we just get a partial bound on $DU_{t}$, we conjecture
that $DU_{t}$ is bounded uniformly w.r.t. time, and $U_{t}$ will
converge to a limit map $U_{\infty}$ even if the origin J-flow does
not converge, but in this case $U_{\infty}$ must degenerate on some
domain in the sense $\det DU_{\infty}=0$, since if $\det DU>\delta$
uniformly imply the flow converges. This degeneracy may violate the
condition (\ref{eq:gabor's condition toric}). We also want to know
if J-flow minimizes the functional, namely if
\[
E_{\alpha}(\omega_{\varphi_{t}})\rightarrow\inf_{\omega\in[\omega]}E_{\alpha}(\omega)
\]
 where
\[
E_{\alpha}(\omega)=\frac{1}{2}\int_{X}(tr_{\omega}\alpha)^{2}\frac{\omega^{n}}{n!}=\frac{1}{2}\int_{\mathcal{P}}(trDU)^{2}\ dy
\]
If it does, $trDU_{\infty}$ may correspond to the worst test-configuration
which be discussed in \cite{STABILITY}.

\section{Toric Manifolds and Potentials}

We review the K\"{a}hler structure on toric manifolds in detail, since
we need the coordinate charts which include the invariant divisors,
the logarithmic coordinate defined on the dense open set push these
divisors to infinity. These coordinates also been introduced in \cite{toric surfaces},
here we give a basis-free formulation. More details of toric variety
see \cite{fulton}.

Let $(\mathbb{C},\times)$, $(\mathbb{R}_{\geq},\times)$ be semi-groups,
$\mathbb{R}_{\geq}$ is the set of non-negative real number. $N$
is a lattice with rank $n$, its dual lattice is $M$, given a Delzant
polytope $\mathcal{P}$ in $M_{R}=M\otimes_{\mathbb{Z}}\mathbb{R}$,
and suppose $\{u_{i}\}\subset N$ be the prime inward normal vectors
of the facets of $\mathcal{P}$, then $\mathcal{P}$ is
\begin{equation}
\mathcal{P}=\{y\in M_{R}\mid d_{i}(y)=\langle u_{i},y\rangle+b_{i}\geq0,\ \textrm{for\ all}\ i\}\label{eq:polytope defini}
\end{equation}
it induces a fan $\Sigma$ (a collection of cones) in $N_{R}=N\otimes_{\mathbb{Z}}\mathbb{R}$.
For a vertex $q$ of $\mathcal{P}$, it corresponds a n-dimensional
cone $\sigma_{q}$ in $\Sigma$,

\[
\sigma_{q}=\{u\in N_{R}\mid\langle u,q\rangle=\min_{\mathcal{P}}\langle u,\cdot\rangle\}
\]
its dual cone $\sigma_{q}^{\vee}=\{y\in M_{R}\mid\langle\sigma_{q},y\rangle\geq0\}$
is generated by $\mathcal{P}-q$, and the semi-group $\sigma_{q}^{\vee}\cap M$
is finitely generated, so we can construct a finitely generated algebra
$\mathbb{C}[\sigma_{q}^{\vee}\cap M]=\{\sum_{v}a_{v}\chi^{v}\mid v\in\sigma_{q}^{\vee}\cap M,\ a_{v}\in\mathbb{C}\}$
with multiplication $\chi^{v}\cdot\chi^{v'}=\chi^{v+v'}$, it defines
an affine open set $U_{q}$
\[
U_{q}=\textrm{Spm}\ \mathbb{C}[\sigma_{q}^{\vee}\cap M]=\{\varphi\mid\varphi:\sigma_{q}^{\vee}\cap M\rightarrow\mathbb{C}\}
\]
where $\varphi$ is a homomorphism between semi-groups. Let $e_{1}^{q},\cdots,e_{n}^{q}\in M$
be the prime vectors rooted at $q$ and along the edges of $\mathcal{P}$,
it is a basis of $M$ due to Delzant's conditions, and generates $\sigma_{q}^{\vee}\cap M$.
Assume $\varphi(e_{i}^{q})=z_{i}^{q}$, then $U_{q}\cong\mathbb{C}^{n}=\{(z_{1}^{q},\cdots,z_{n}^{q})\}$.
$U_{q}$ include the dense open subset $U_{0}=\textrm{Spm}\ \mathbb{C}[M]=\{\varphi:M\rightarrow\mathbb{C}^{*}\}\cong N\otimes_{\mathbb{Z}}\mathbb{C}^{*}\cong(\mathbb{C}^{*})^{n}$.
$\{U_{q}\}$ glues with each other to form a toric manifold $X_{\Sigma}$
with a torus $\textrm{Spm}\ \mathbb{C}[M]$ action.

\begin{figure}
\begin{diagram}
&&&&U_0  &\rInto & U_q &  \rInto & X_{\Sigma}\\
&&&&\dTo_{\pi_0}  &  &   \dTo_{\pi_q}   &    &  \dTo_{\pi}  \\
M_R & \lTo^{d\phi} & N_R &\lTo^{\cong}& U_0^{\geq}  &\rInto & U_q^{\geq}  & \rInto & X_{\Sigma}^{\geq}
\end{diagram}

\caption{}

\end{figure}

The subset $U_{q}^{\geq}=\{\varphi:\sigma_{q}^{\vee}\cap M\rightarrow\mathbb{R}^{\geq}\}\cong\mathbb{R}_{\geq}^{n}$
is called the non-negative part of $U_{q}$, $\pi_{q}:U_{q}\rightarrow U_{q}^{\geq}$
is defined by $\varphi\mapsto|\varphi|^{2}$. In the same way we have
$\pi_{0}:U_{0}\rightarrow U_{0}^{\geq}=\{\varphi:M\rightarrow\mathbb{R}^{+}\}\cong N\otimes_{\mathbb{Z}}\mathbb{R}^{+}$.
Since $(\mathbb{R}^{+},\times)\cong(\mathbb{R},+)$ by $a\mapsto\log a$,
we identify $N\otimes_{\mathbb{Z}}\mathbb{R}^{+}$ with $N\otimes_{\mathbb{Z}}\mathbb{R}=N_{R}$.
$\{U_{q}^{\geq}\}$ glues with each other to form a closed subset
$X_{\Sigma}^{\geq}\subset X_{\Sigma}$, and $\{\pi_{q}\}$ glues to
a continuous map $\pi:X_{\Sigma}\rightarrow X_{\Sigma}^{\geq}$.

Let $u$ be a symplectic potential, that is a convex function defined
on $\bar{\mathcal{P}}$ and satisfies
\begin{itemize}
\item restrict $u$ in the interior of a face, it is smooth and strictly
convex.
\item Guillemin boundary condition, $u=\sum d_{i}(y)\log d_{i}(y)+v$, $v\in\mathcal{C^{\infty}}(\bar{\mathcal{P}})$.
\end{itemize}
Then $u$ induces an invariant K\"{a}hler metric $\omega_{u}$ in following
way, let $\phi$ be the Legendre transform of $u$, which is defined
on $N_{R}$, for $x\in N_{R}$,
\[
\phi(x)=\langle x,y\rangle-u(y),\ x=du(y),\ y=d\phi(x)
\]
Take a vertex $q$, let $\phi_{q}=\phi-\langle\cdot,q\rangle$, $\phi_{q}$
can be defined on $U_{0}^{\geq}$ through $U_{0}^{\geq}\cong N\otimes_{\mathbb{Z}}\mathbb{R}^{+}\cong N_{R}$.
Since $U_{0}^{\geq}\subset U_{q}^{\geq}$, $\phi_{q}$ can be extended
smoothly on $U_{q}^{\geq}$ due to the Guillemin's boundary condition,
denote the extension is $\bar{\phi}_{q}$. Let $\Phi_{q}=\bar{\phi}_{q}\circ\pi_{q}$,
then $\omega_{q}=\ddb\Phi_{q}>0$ on $U_{q}$ due to the convexity
of $u$. Since $\phi_{q}-\phi_{q'}=\langle\cdot,q'-q\rangle$, let
$\chi_{q}$ be the pullback of $\langle\cdot,q\rangle$ by the composed
map $U_{0}\rightarrow U_{0}^{\geq}\rightarrow N_{R}$, so on $U_{0}$,
we have $\Phi_{q}-\Phi_{q'}=\chi_{q'-q}$. Take a basis of $M$, $\chi_{q}$
is $(z_{1},\cdots,z_{n})\longmapsto q_{1}\log|z_{1}|^{2}+\cdots+q_{n}\log|z_{n}|^{2}$,
so $\partial\bar{\partial}\chi_{q}=0$. Hence $\omega_{q}=\omega_{q'}$
on $U_{0}$, so is on $U_{q}\cap U_{q'}$ since $U_{0}$ is dense,
so $\{\omega_{q}\}$ defines $\omega_{u}$. In another way, let $\Phi$
be the pullback of $\phi$ by $U_{0}\rightarrow U_{0}^{\geq}\rightarrow N_{R}$,
then $\omega_{u}=\ddb\Phi$ on $U_{0}$, and the extension of $\Phi-\chi_{q}$
to $U_{q}$ is $\Phi_{q}$.

The map $U_{0}^{\geq}\rightarrow N_{R}\stackrel{d\phi}{\rightarrow}M_{R}$
can be extended to $X_{\Sigma}^{\geq}$ as a homeomorphism onto $\bar{\mathcal{P}}$,
compose it with $\pi$ is the moment map $\mu$ of $(X_{\Sigma},\omega_{u})$
with respect to the torus action.

The K\"{a}hler class $[\omega_{u}]=\sum b_{i}[D_{i}]$, $D_{i}$ is the
invariant divisor corresponding to $u_{i}$.

Now we take a basis of $M$ and write down the above things explicitly,
let $\{e_{i}^{q}\}$ be the basis mentioned above, and the dual basis
of $N$ is $\{v_{q}^{i}\}$, it is the prime generators of the cone
$\sigma_{q}$.

Then $U_{q}$ have coordinates $z_{i}^{q}$, $U_{q}^{\geq}\cong\mathbb{R}_{\geq}^{n}$
have coordinates $a_{i}^{q}\geq0$, and $U_{0}^{\geq}=\{(a_{i}^{q})\mid a_{i}^{q}>0\}$,
$N_{R}$ have coordinates $x_{i}^{q}$, $M_{R}$ have coordinates
$y_{q}^{i}$. The map $\pi_{q}:U_{q}\rightarrow U_{q}^{\geq}$ is
$(z_{i}^{q})\mapsto(|z_{i}^{q}|^{2})$, and the identification $U_{0}^{\geq}\rightarrow N_{R}$
is $(a_{i}^{q})\mapsto(\log a_{i}^{q})$.

In the following we omit the index $q$ of variables for simplicity,
$x_{i}$, $a_{i}$ always means $x_{i}^{q}$, $a_{i}^{q}$, etc.

Let
\[
u_{q}(y^{1},\cdots,y^{n})=u(q+y^{1}e_{1}^{q}+\cdots+y^{n}e_{n}^{q}),\ y^{i}\geq0
\]
Then the Legendre transform of $u_{q}$ is $\phi_{q}(\sum x_{i}v_{q}^{i}),\ x_{i}=\partial u_{q}/\partial y^{i}$,
denote it as $\phi_{q}(x_{i})\triangleq\phi_{q}(x_{1},\cdots,x_{n})$
for short. Since the map $U_{0}\rightarrow U_{0}^{\geq}\rightarrow N_{R}$
is $(z_{i})\mapsto\sum\log|z_{i}|^{2}\ v_{q}^{i}$, so $\Phi_{q}(z_{i})=\phi_{q}(\log|z_{i}|^{2})$,
it can be extended smoothly on $\mathbb{C}^{n}$ as a K\"{a}hler potential,
and $\bar{\phi}_{q}(a_{i})=\phi_{q}(\log a_{i})$ can be extended
smoothly on $\mathbb{R}_{\geq}^{n}$. We have three functions $\Phi_{q}(z_{i})$,
$(z_{i})\in\mathbb{C}^{n}$, $\bar{\phi}_{q}(a_{i})$, $(a_{i})\in\mathbb{R}_{\geq}^{n}$,
$\phi_{q}(x_{i})$, $(x_{i})\in\mathbb{R}^{n}$ and satisfy
\[
\Phi_{q}(z_{i})=\bar{\phi}_{q}(a_{i})=\phi_{q}(x_{i}),\ a_{i}=|z_{i}|^{2},\ x_{i}=\log a_{i}
\]

For the convexity of these functions, on $\mathbb{C}^{n}$
\begin{equation}
\left[\frac{\pp^{2}\Phi_{q}}{\pp z_{i}\pp\bar{z}_{j}}\right]=\left[\delta_{ij}\frac{\pp\bar{\phi}_{q}}{\pp a_{i}}+\bar{z_{i}}z_{j}\frac{\pp^{2}\bar{\phi_{q}}}{\pp a_{i}\pp a_{j}}\right]>0\label{eq:Phi_ij}
\end{equation}
On $(\mathbb{C}^{*})^{n}$

\begin{equation}
\left[\frac{\pp^{2}\Phi_{q}}{\pp z_{i}\pp\bar{z}_{j}}\right]=\left[\frac{1}{z_{i}\bar{z_{j}}}\frac{\pp^{2}\phi_{q}}{\pp x_{i}\pp x_{j}}\right]>0\label{eq:phi_ij on C-star}
\end{equation}
$\phi_{q}$ is strictly convex on $\mathbb{R}^{n}$. For $\bar{\phi_{q}}$,
note that when $z_{i}$, $z_{j}\neq0$
\begin{equation}
\delta_{ij}\frac{\pp\bar{\phi}_{q}}{\pp a_{i}}+\bar{z_{i}}z_{j}\frac{\pp^{2}\bar{\phi_{q}}}{\pp a_{i}\pp a_{j}}=\left(\delta_{ij}a_{i}\frac{\pp\bar{\phi}_{q}}{\pp a_{i}}+a_{i}a_{j}\frac{\pp^{2}\bar{\phi_{q}}}{\pp a_{i}\pp a_{j}}\right)\frac{1}{z_{i}\bar{z_{j}}}\label{eq:=005B9E=005316}
\end{equation}
we can see the RHS of (\ref{eq:Phi_ij}) is positive definite if and
only if $\bar{\phi_{q}}$ satisfies
\begin{itemize}
\item $\frac{\pp\bar{\phi_{q}}}{\pp a_{i}}>0$ when $a_{i}=0$
\item for any $\Lambda\subseteq\{1,\cdots,n\}$, it can be empty, on coordinate
plane $\{a_{i}>0,i\in\Lambda,a_{i}=0,i\notin\Lambda\}$,
\[
\left[\delta_{ij}a_{i}\frac{\pp\bar{\phi}_{q}}{\pp a_{i}}+a_{i}a_{j}\frac{\pp^{2}\bar{\phi_{q}}}{\pp a_{i}\pp a_{j}}\right]_{i,j\in\Lambda}>0
\]

\end{itemize}
$\bar{\phi_{q}}$ and $u_{q}$ transform to each other in a way similar
to the Legendre transform. For example, on coordinate plane $\{y\mid y^{n}=0,\ y^{i}\neq0,\ i<n\}$
corresponding to the facet of polytope,
\[
u_{q}(y^{i},0)=\sum_{i<n}y^{i}\log a_{i}-\bar{\phi_{q}}(a_{i},0),\ y^{i}=a_{i}\frac{\partial\bar{\phi}_{q}}{\partial a_{i}}(a_{k},0),\ \textrm{for}\ i<n
\]
So $\bar{\phi_{q}}|_{a_{n}=0}$ is determinate by $u_{q}|_{y^{n}=0}$.

The moment map%
\footnote{We call it as moment map just for convenient.%
} $X_{\Sigma}^{\geq}\rightarrow\bar{\mathcal{P}}$, restrict on $U_{q}^{\geq}\cong\mathbb{R}_{\geq}^{n}$
is

\begin{equation}
(a_{i})\mapsto q+a_{1}\frac{\pp\bar{\phi}_{q}}{\pp a_{1}}e_{1}^{q}+\cdots+a_{n}\frac{\pp\bar{\phi}_{q}}{\pp a_{n}}e_{n}^{q}\label{eq:moment map}
\end{equation}
it is smooth since $\bar{\phi}_{q}$ is smooth on $\mathbb{R}_{\geq}^{n}$.
The inverse map is
\begin{equation}
q+y^{1}e_{1}^{q}+\cdots+y^{n}e_{n}^{q}\mapsto\left(\exp\frac{\pp u_{q}}{\pp y^{i}}\right)\label{eq:inverse of moment map}
\end{equation}
Since $u$ satisfies the Guillemin's boundary condition, $u_{q}=\sum y^{i}\log y^{i}+v(y)$,
$v$ is smooth to the boundary, so $\exp\frac{\pp u_{q}}{\pp y^{i}}=y^{i}\exp(1+\frac{\pp v}{\pp y^{i}})$
is smooth. Hence $X_{\Sigma}^{\geq}\rightarrow\bar{\mathcal{P}}$
is actually a diffeomorphism.

On $U_{0}^{\geq}$, compose (\ref{eq:moment map}) and its inverse
(\ref{eq:inverse of moment map}), we have

\[
\frac{\pp u_{q}}{\pp y^{i}}\left(a_{1}\frac{\pp\bar{\phi}_{q}}{\pp a_{1}},\cdots,a_{n}\frac{\pp\bar{\phi}_{q}}{\pp a_{n}}\right)=\log a_{i}
\]
Take derivative w.r.t. $a_{j}$,
\[
\sum_{k}\frac{\pp^{2}u_{q}}{\pp y^{i}\pp y^{k}}\left(\delta_{kj}a_{j}\frac{\pp\bar{\phi}_{q}}{\pp a_{j}}+a_{k}a_{j}\frac{\pp^{2}\bar{\phi_{q}}}{\pp a_{k}\pp a_{j}}\right)=\delta_{ij}
\]
so on $U_{0}^{\geq}$,
\begin{equation}
\left[\delta_{ij}a_{i}\frac{\pp\bar{\phi}_{q}}{\pp a_{i}}+a_{i}a_{j}\frac{\pp^{2}\bar{\phi_{q}}}{\pp a_{i}\pp a_{j}}\right]^{-1}(a_{k})=\left[\frac{\pp^{2}u_{q}}{\pp y^{i}\pp y^{j}}\right](a_{k}\frac{\pp\bar{\phi}_{q}}{\pp a_{k}})\label{eq:inverse of phi_ij}
\end{equation}
with (\ref{eq:Phi_ij}) and (\ref{eq:=005B9E=005316}), $a_{i}=|z_{i}|^{2}$,
we have
\begin{equation}
\left[\frac{\pp^{2}\Phi_{q}}{\pp z_{i}\pp\bar{z}_{j}}\right]^{-1}(z_{i})=\left[\bar{z}_{i}z_{j}\frac{\pp^{2}u_{q}}{\pp y^{i}\pp y^{j}}\right](a_{i}\frac{\pp\bar{\phi}_{q}}{\pp a_{i}})\label{eq:inverse of Big phi_ij}
\end{equation}
On the coordinate plane we have similar formula, for example on $\{z_{n}=z_{n-1}=0,\ z_{\alpha}\neq0\}$,

\begin{equation}
\left[\frac{\pp^{2}\Phi_{q}}{\pp z_{i}\pp\bar{z}_{j}}\right]^{-1}=\left[\begin{array}{ccc}
\left(\bar{z}_{i}z_{j}\frac{\pp^{2}u_{q}}{\pp y^{i}\pp y^{j}}\right)_{i,j<n-1} & 0 & 0\\
0 & (\frac{\pp\bar{\phi}_{q}}{\pp a_{n-1}})^{-1} & 0\\
0 & 0 & (\frac{\pp\bar{\phi}_{q}}{\pp a_{n}})^{-1}
\end{array}\right]\label{eq:inverse zn=00003Dzn-1=00003D0}
\end{equation}

We know $[u_{q,ij}]$ is singular on the boundary of polytope, however
its inverse can be extended smoothly to the boundary.
\begin{prop}
\label{g^ij IS smooth over polytope}The inverse of $[u_{q,ij}]$
can be extended smoothly on \textup{$\bar{\mathcal{P}}$.}\end{prop}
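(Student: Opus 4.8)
The plan is to read the inverse Hessian directly off the explicit identity (\ref{eq:inverse of phi_ij}) and to recognize its right-hand side as the restriction of an object that is manifestly smooth up to the boundary. Recall that $\bar{\phi}_{q}$ extends smoothly to all of $\mathbb{R}_{\geq}^{n}$, so the matrix
\[
G_{q}(a):=\left[\delta_{ij}a_{i}\frac{\pp\bar{\phi}_{q}}{\pp a_{i}}+a_{i}a_{j}\frac{\pp^{2}\bar{\phi_{q}}}{\pp a_{i}\pp a_{j}}\right]
\]
is smooth on $\mathbb{R}_{\geq}^{n}$, including the coordinate hyperplanes $\{a_{i}=0\}$. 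On $U_{0}^{\geq}$, where every $a_{i}>0$ and $u_{q}$ is strictly convex, (\ref{eq:inverse of phi_ij}) says precisely that $G_{q}(a)=[u_{q,ij}]^{-1}$ evaluated at the image point $\mu_{q}(a)=(a_{k}\,\pp\bar{\phi}_{q}/\pp a_{k})$. Since $\mu_{q}$ is the restriction to the chart $U_{q}^{\geq}$ of the global diffeomorphism $X_{\Sigma}^{\geq}\rightarrow\bar{\mathcal{P}}$, it is a diffeomorphism onto its image with smooth inverse, and therefore $G_{q}\circ\mu_{q}^{-1}$ is a smooth matrix field agreeing with $[u_{q,ij}]^{-1}$ on the interior. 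This already furnishes the smooth extension over $\mu_{q}(\mathbb{R}_{\geq}^{n})$, which is $\bar{\mathcal{P}}$ with the faces not containing $q$ removed; those ``far'' faces correspond to some $a_{i}\to\infty$, where $\mu_{q}^{-1}$ blows up and the formula $G_{q}$ no longer applies.

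To reach the far faces I would switch charts. For any other vertex $q'$ the coordinates $(y^{i})$ and $(\tilde{y}^{i})$ attached to $q$ and $q'$ differ by a constant affine map (the change of basis between $\{e_{i}^{q}\}$ and $\{e_{i}^{q'}\}$ together with a translation), so the Hessians transform tensorially by a constant matrix $B$ and consequently
\[
[u_{q,ij}]^{-1}=B\,[u_{q',kl}]^{-1}\,B^{T}.
\]
Because $B$ is constant, smoothness of $[u_{q',kl}]^{-1}$ near a point forces smoothness of $[u_{q,ij}]^{-1}$ at that point. Now take any $p\in\bar{\mathcal{P}}$, let $F$ be the smallest face containing $p$, and choose a vertex $q'$ of $F$; the faces of $\bar{\mathcal{P}}$ through $p$ are exactly those containing $F$, all of which contain $q'$, so $p$ lies in the image $\mu_{q'}(\mathbb{R}_{\geq}^{n})$ and $\mu_{q'}^{-1}$ is defined and smooth near $p$. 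By the first step $[u_{q',kl}]^{-1}$ extends smoothly near $p$, hence so does $[u_{q,ij}]^{-1}$. Since $p$ is arbitrary and all these local extensions coincide with $[u_{q,ij}]^{-1}$ on the dense interior, they glue to a smooth extension on all of $\bar{\mathcal{P}}$.

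The main obstacle is exactly the behavior on the faces not through $q$: a single chart $U_{q}^{\geq}$ cannot see them, so one is forced to re-express the inverse Hessian in the chart of a vertex actually lying on the relevant face, where it is manifestly smooth, and then transport smoothness back by the tensorial law above. I would also emphasize that the extension is permitted to degenerate on $\partial\bar{\mathcal{P}}$: the claim is smoothness, not invertibility. Indeed the rows and columns of $G_{q}$ indexed by $i$ vanish on $\{a_{i}=0\}$, exactly matching the $\mathbb{P}^{1}$ model $u''=1/\!\left(y(1-y)\right)$, whose inverse $(u'')^{-1}=y(1-y)$ is smooth on $[0,1]$ while vanishing at the endpoints, so this degeneracy is not an obstruction to the statement.
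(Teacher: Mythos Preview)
Your proof is correct and follows the same idea as the paper: express $[u_{q}^{ij}]$ via (\ref{eq:inverse of phi_ij}) composed with the smooth inverse moment map (\ref{eq:inverse of moment map}), which the paper records as the single formula (\ref{eq: g^ij formular}). The paper's argument is a one-liner that does not treat the faces not through $q$ explicitly; your chart-switching via the constant tensorial law makes that step rigorous, but the underlying approach is identical.
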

\begin{proof}
From (\ref{eq:inverse of phi_ij}) and (\ref{eq:inverse of moment map})
we know the inverse
\begin{equation}
[u_{q}^{ij}](y^{k})=\left[\delta_{ij}a_{i}\frac{\pp\bar{\phi}_{q}}{\pp a_{i}}+a_{i}a_{j}\frac{\pp^{2}\bar{\phi_{q}}}{\pp a_{i}\pp a_{j}}\right](\exp\frac{\pp u_{q}}{\pp y^{k}})\label{eq: g^ij formular}
\end{equation}
Obviously it is can be extended to the boundary, moreover $[u_{q}^{ij}]$
is semi-positive definite on the face, and positive definite in the
tangent space of face.
\end{proof}

\section{Transition Between Moment Maps}

Let $\mathcal{P}$ and $\mathcal{Q}$ be defined by (\ref{eq:polytope defini})
with different collection of $b_{i}$, they have similar shape, for
every face of $\mathcal{P}$ there is a corresponding face of $\mathcal{Q}$
parallel to it. Then induce same fan $\Sigma$, so same toric manifold
$X_{\Sigma}$. Let $[\omega]$ and $[\alpha]$ be the corresponding
K\"{a}hler class. The invariant K\"{a}hler metric $\omega$ induces a moment
map $\mu_{\omega}:X_{\Sigma}\rightarrow\bar{\mathcal{P}}$, and $\alpha$
induces another one $\mu_{\alpha}:X_{\Sigma}\rightarrow\bar{\mathcal{Q}}$.
It is well known that $\mu_{\omega}$ establishes a one-to-one correspondence
between the real torus orbits and the points of $\bar{\mathcal{P}}$,
so there is a transition map $U:\bar{\mathcal{P}}\rightarrow\bar{\mathcal{Q}}$
such that $U\circ\mu_{\omega}=\mu_{\alpha}$, it is a homeomorphism.
We next check it is actually a diffeomorphism.

Let $u(y)$ and $g(y)$ be the symplectic potentials of $\omega$
and $\alpha$, they are defined on $\mathcal{\bar{P}}$ and $\mathcal{\bar{Q}}$
respectively. Take a vertex $q$ of $\mathcal{P}$, the corresponding
vertex of $\mathcal{Q}$ is $q'$. It corresponds an affine open set
$U_{q}=U_{q'}$, we have coordinates on it. Without loss of generality%
\footnote{We can translate the polytope such that $q=0$.%
}, we assume $q=q'=0$, so $\phi=\phi_{q}$, $u=u_{q}$. In the following,
we omit the index $q$, $q'$ of potentials, and denote the potentials
$\Phi$, $\bar{\phi}$, $\phi$ for $\omega$, $F$, $\bar{f}$, $f$
for $\alpha$, such that $\omega=\ddb\Phi=\frac{\sqrt{-1}}{2\pi}\Phi_{i\bar{j}}dz^{i}\wedge d\bar{z}^{j}$,
$\alpha=\ddb F=\frac{\sqrt{-1}}{2\pi}F_{i\bar{j}}dz^{i}\wedge d\bar{z}^{j}$.

Assume
\[
U(q+y^{1}e_{1}^{q}+\cdots+y^{n}e_{n}^{q})=q'+\sum_{i}U^{i}(y)e_{i}^{q}
\]
Note that $\mu_{\omega}$ is the composition of $\pi$ with $X_{\Sigma}^{\geq}\rightarrow\bar{\mathcal{P}}$,
$\pi$ is independent of metrics, then compose (\ref{eq:moment map})
for $\bar{f}$ and (\ref{eq:inverse of moment map}) for $u$, we
have

\begin{equation}
U^{i}(y)=\frac{\pp\bar{f}}{\pp a_{i}}\left(\exp\frac{\pp u}{\pp y^{1}},\cdots,\exp\frac{\pp u}{\pp y^{n}}\right)\exp\frac{\pp u}{\pp y^{i}}\label{eq:transition map}
\end{equation}
Since $\exp\frac{\pp u}{\pp y^{i}}=y^{i}\exp(1+\frac{\pp v}{\pp y^{i}})$
is smooth due to the Guillemin's boundary condition, so $U$ is a
diffeomorphism.

When $y^{k}>0$ for all $k$, namely in the interior of $\bar{\mathcal{P}}$,
by $\bar{f}(a_{i})=f(\log a_{i})$,

\[
U^{i}(y)=\frac{\pp f}{\pp x_{i}}\left(\frac{\pp u}{\pp y^{1}},\cdots,\frac{\pp u}{\pp y^{n}}\right)
\]
namely $(U^{i})=\nabla f(\nabla u)$, since $\nabla f:\mathbb{R}^{n}\rightarrow\mathcal{Q}$
is a diffeomorphism and the inverse is $\nabla g$, so $\nabla u=\nabla g(U)$.
Change the order of derivatives, $u_{ij}=u_{ji}$, we see $U$ must
satisfy a compatible condition,

\begin{equation}
\sum_{k}g^{ik}(U)\frac{\pp U^{j}}{\pp y^{k}}=\sum_{k}g^{jk}(U)\frac{\pp U^{i}}{\pp y^{k}}\label{eq:compatible}
\end{equation}
by Proposition \ref{g^ij IS smooth over polytope}, $g^{ik}$ is smooth
on $\mathcal{\bar{Q}}$, so above identity is actually valid on $\mathcal{\bar{P}}$.
\begin{rem}
The compatible condition can be described in a natural way, for a
point $y\in\mathcal{P}$, we define a metric on $T_{y}M_{R}=M_{R}$
by $\langle\frac{\partial}{\partial y^{i}},\frac{\partial}{\partial y^{j}}\rangle=g_{ij}(U(y))$,
then (\ref{eq:compatible}) says $DU|_{y}:M_{R}\rightarrow M_{R}$
is self-dual and positive, positive is because $\langle DU\frac{\partial}{\partial y^{i}},\frac{\partial}{\partial y^{j}}\rangle=\frac{\pp U^{k}}{\pp y^{i}}g_{kj}(U)=f_{kl}(\nabla u)u_{li}g_{kj}(U)=u_{ji}$
is positive definite. If $y\in\partial P$, we define the metric on
the tangent space of face, and restrict $DU|_{y}$ on the tangent
space is self-dual and positive.
\end{rem}
For metric $\omega$ and $\alpha$, we defined a linear transform
$A$ on $T^{1,0}X$ by
\[
\langle A\pp_{i},\pp_{j}\rangle_{\omega}=\langle\pp_{i},\pp_{j}\rangle_{\alpha}
\]
where $\langle\pp_{i},\pp_{j}\rangle_{\omega}=\Phi_{i\bar{j}}$. Suppose
$A\pp_{i}=A_{i}^{k}\pp_{k}$, then $A_{i}^{k}=F_{i\bar{j}}\Phi^{k\bar{j}}$.
$A$ is self-dual respect to $\omega$ and $\alpha$, namely $\langle A\pp_{i},\pp_{j}\rangle_{\omega}=\langle\pp_{i},A\pp_{j}\rangle_{\omega}$,
$\langle A\pp_{i},\pp_{j}\rangle_{\alpha}=\langle\pp_{i},A\pp_{j}\rangle_{\alpha}$,
and positive $\langle Ax,x\rangle_{\omega}=\langle x,x\rangle_{\alpha}>0$
for $0\neq x\in T^{1,0}M$.

On $U_{0}\cong(\mathbb{C}^{*})^{n}$, by (\ref{eq:phi_ij on C-star})
and (\ref{eq:inverse of Big phi_ij}),
\begin{equation}
A_{i}^{k}=F_{i\bar{j}}\Phi^{k\bar{j}}=\frac{1}{z_{i}\bar{z_{j}}}f_{ij}z_{k}\bar{z}_{j}u_{kj}=\frac{z_{k}}{z_{i}}\frac{\partial U^{i}}{\partial y^{k}}\label{eq:  A  formula}
\end{equation}
The characteristic polynomial of $A:T^{1,0}M\rightarrow T^{1,0}M$
\[
\det(id+tA)=\det[\delta_{i}^{k}+tA_{i}^{k}]=\det[\frac{z_{k}}{z_{i}}(\delta_{i}^{k}+t\frac{\partial U^{i}}{\partial y^{k}})]=\det(id+tDU)|_{\mu_{\omega}}
\]
This identity holds on $X$ by continuation.
\begin{prop}
\label{prop: polynimial}$A:T^{1,0}M\rightarrow T^{1,0}M$ with $DU:M_{R}\rightarrow M_{R}$
have the same characteristic polynomial. In particular, the eigenvalues
of $DU$ are positive.
\end{prop}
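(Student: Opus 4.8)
The plan is to reduce the whole statement to the explicit expression (\ref{eq:  A  formula}) for $A_{i}^{k}$ on the open dense torus $U_{0}\cong(\mathbb{C}^{*})^{n}$ and then to propagate the resulting identity to all of $X$ by density. Working in the torus-invariant holomorphic frame $\pp_{1},\dots,\pp_{n}$, I would first record that the matrix of $id+tA$ is
\[
\delta_{i}^{k}+tA_{i}^{k}=\frac{z_{k}}{z_{i}}\Bigl(\delta_{i}^{k}+t\,\frac{\pp U^{i}}{\pp y^{k}}\Bigr),
\]
where the diagonal terms are unchanged because $z_{k}/z_{i}=1$ when $i=k$. The key linear-algebra observation is that the prefactor $z_{k}/z_{i}$ is exactly conjugation by the diagonal matrix $D=\mathrm{diag}(z_{1},\dots,z_{n})$: setting $C_{i}^{k}=\delta_{i}^{k}+t\,\pp U^{i}/\pp y^{k}$, one checks $(D^{-1}CD)_{i}^{k}=(z_{k}/z_{i})C_{i}^{k}$, so $[\,id+tA\,]=D^{-1}CD$. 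Since the determinant is conjugation-invariant, this yields $\det(id+tA)=\det C=\det(id+tDU)|_{\mu_{\omega}}$ as polynomials in $t$ at every point of $U_{0}$, which is precisely the asserted coincidence of characteristic polynomials on the open torus.

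Next I would extend this from $U_{0}$ to all of $X$ by continuity. The coefficients of $\det(id+tA)$ are smooth functions on $X$, because the entries $A_{i}^{k}=F_{i\bar{j}}\Phi^{k\bar{j}}$ are globally smooth; likewise $y\mapsto\det(id+t\,DU(y))$ is smooth on $\bar{\mathcal{P}}$ by smoothness of the transition map $U$ (from (\ref{eq:transition map})) together with the smoothness of $g^{ij}$ on $\bar{\mathcal{Q}}$ furnished by Proposition \ref{g^ij IS smooth over polytope}. Two smooth functions on $X$ that agree on the dense open set $U_{0}$ agree everywhere, and applying this coefficientwise in $t$ gives the equality of the two characteristic polynomials on all of $X$, including the toric divisors $\{z_{i}=0\}$.

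For the final sentence I would invoke the positivity of $A$ established just before the proposition: since $\langle Ax,x\rangle_{\omega}=\langle x,x\rangle_{\alpha}>0$ and $A$ is self-adjoint with respect to $\omega$, its eigenvalues are positive real numbers $\lambda_{1},\dots,\lambda_{n}$ and $\det(id+tA)=\prod_{j}(1+t\lambda_{j})$. The identity just proved forces $\det(id+tDU)$ to share this factorization, so the eigenvalues of the real operator $DU$ are exactly $\lambda_{1},\dots,\lambda_{n}>0$. The one genuinely nontrivial point, in my view, is reconciling the fact that $A$ is complex-linear on $T^{1,0}M$ whereas $DU$ is real-linear on $M_{R}$, so that a priori $\det(id+tA)$ need not even have real coefficients; formula (\ref{eq:  A  formula}) is what resolves this, by exhibiting $A$ in the invariant frame as a (real) conjugate of $DU$ with all the $z$-dependence absorbed into the conjugating diagonal matrix $D$. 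A secondary subtlety to keep in mind is that this conjugation identity is only literally available on $U_{0}$, so the passage to the divisors must be carried out by the density argument rather than by the formula, which degenerates there.
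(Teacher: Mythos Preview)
Your proposal is correct and follows essentially the same route as the paper: the paper computes $\det(id+tA)=\det[\frac{z_{k}}{z_{i}}(\delta_{i}^{k}+t\,\pp U^{i}/\pp y^{k})]=\det(id+tDU)|_{\mu_{\omega}}$ on $U_{0}$ and then says ``this identity holds on $X$ by continuation,'' which is exactly your conjugation-by-$D$ argument followed by density. Your added remarks on why the conjugation kills the $z_{k}/z_{i}$ factors and on the positivity of the eigenvalues via self-adjointness of $A$ simply make explicit what the paper leaves implicit.
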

Note that

\[
\det(id+tA)=\frac{(\omega+t\alpha)^{n}}{\omega^{n}}
\]
so we have
\[
\frac{(\omega+t\alpha)^{n}}{\omega^{n}}=\det(id+tDU)|_{\mu_{\omega}}
\]
In particular,
\begin{equation}
tr_{\omega}\alpha=\frac{n\omega^{n-1}\wedge\alpha}{\omega^{n}}=trA=trDU|_{\mu_{\omega}},\ \frac{\alpha^{n}}{\omega^{n}}=\det A=\det DU|_{\mu_{\omega}}\label{eq:trace formular}
\end{equation}
Donaldson's equation (\ref{eq:donaldson equation}) is $trDU\equiv nc$,
and $U$ is subject to (\ref{eq:compatible}).

Moreover, when we restrict metrics on the invariant subvariety, we
have
\begin{prop}
\label{trD(U|_F)  formula}Assume $F$ is a $p$-dimensional face
of $\mathcal{P}$, $V$ is the invariant subvariety corresponding
to $F$, $\omega_{V}$ is the restriction of $\omega$ on $V$, $U|_{F}:F\rightarrow F'$
is the restriction of $U$, $F'$ is the corresponding face of $\mathcal{Q}$,
then on $V$
\[
\frac{(\omega_{V}+t\alpha_{V})^{p}}{\omega_{V}^{p}}=\det(id+tD(U|_{F}))|_{\mu_{\omega}}
\]
where $\mu_{\omega}$ map $V$ onto $\bar{F}$, $D(U|_{F})=DU|_{TF}$
is the tangent map of $U|_{F}$, since $TF=TF'$, it is a linear transform
of $TF$.\end{prop}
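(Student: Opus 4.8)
The plan is to reduce the statement to Proposition \ref{prop: polynimial} applied to $V$ itself. The key observation is that $V$ is again a toric manifold, with defining polytope the face $\bar F$ (and $\bar{F'}$ on the $\alpha$ side). Since $\omega$ and $\alpha$ are torus-invariant, the restrictions $\omega_V$ and $\alpha_V$ are invariant K\"{a}hler metrics on $V$, and the moment map of $(V,\omega_V)$ is $\mu_\omega|_V$, mapping $V$ onto $\bar F$ (and likewise $\mu_\alpha|_V$ onto $\bar{F'}$). Granting this, the transition map of the two moment maps of $V$ is the unique $U_V:\bar F\to\bar{F'}$ with $U_V\circ\mu_\omega|_V=\mu_\alpha|_V$, and Proposition \ref{prop: polynimial} for $V$ gives
\[
\frac{(\omega_V+t\alpha_V)^p}{\omega_V^p}=\det(\mathrm{id}+tDU_V)|_{\mu_\omega}.
\]
Everything then comes down to identifying $U_V$ with $U|_F$.

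To set this up I would fix a vertex $q$ of $F$ and work in the chart $U_q$, ordering the edge basis so that $e_1^q,\dots,e_p^q$ are tangent to $F$ and $V\cap U_q=\{z_{p+1}=\dots=z_n=0\}=\{a_{p+1}=\dots=a_n=0\}$. Restricting the $(1,1)$-form to $V$ drops the differentials $dz^k$ with $k>p$, so $\omega_V$ has matrix the $p\times p$ block $[\Phi_{i\bar j}]_{i,j\le p}$ evaluated on $V$; this is the complex Hessian of the restricted K\"{a}hler potential $\Phi|_V$, which corresponds via the formulas of Section 2 to the reduced toric potential $\bar\phi_V(a_1,\dots,a_p):=\bar\phi(a_1,\dots,a_p,0,\dots,0)$. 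By the remark in Section 2 that $\bar\phi|_{a_n=0}$ is determined by $u|_{y^n=0}$, applied to the $n-p$ transverse coordinates, the symplectic potential of $\omega_V$ is exactly $u|_F$; symmetrically $\bar f_V$ and $g|_{F'}$ are the potentials of $\alpha_V$. Hence the structural identities of Section 2, and in particular the transition-map formula (\ref{eq:transition map}) and Proposition \ref{prop: polynimial}, apply verbatim to $V$.

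The heart of the matter is the identity $U_V=U|_F$, and here the Guillemin boundary condition does the work. Writing (\ref{eq:transition map}) for $U$ and restricting to $F$, where $y^{p+1}=\dots=y^n=0$, the Guillemin form $\exp\frac{\pp u}{\pp y^k}=y^k\exp(1+\frac{\pp v}{\pp y^k})$ forces the last $n-p$ arguments of $\pp\bar f/\pp a_i$ to vanish on $F$, while the first $p$ become $\exp\frac{\pp u_V}{\pp y^1},\dots,\exp\frac{\pp u_V}{\pp y^p}$; for $i\le p$ this collapses $U^i|_F$ to precisely the formula (\ref{eq:transition map}) computed from $\bar f_V$ and $u_V$, that is to $U_V^i$. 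The same Guillemin form gives $U^i|_F\equiv 0$ for $i>p$ (consistent with $U$ mapping $F$ into $F'\subset\{y^{p+1}=\dots=y^n=0\}$), so these components drop out and $D(U|_F)=DU|_{TF}=[\pp U^i/\pp y^k]_{i,k\le p}=DU_V$ on $TF=TF'$. I expect this vanishing of the transverse arguments to be the only genuine obstacle; inserting $DU_V=D(U|_F)$ into the displayed identity from Proposition \ref{prop: polynimial} for $V$ then yields the claim.
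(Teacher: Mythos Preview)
Your approach is exactly the paper's: the one-line proof there is precisely ``$\mu_\omega|_V$ is the moment map of $(V,\omega_V)$,'' after which Proposition~\ref{prop: polynimial} applied to the toric manifold $V$ gives the formula. Your detailed verification that $U_V=U|_F$ via the explicit formula (\ref{eq:transition map}) is correct but unnecessary---since $U$ is defined by $U\circ\mu_\omega=\mu_\alpha$ on all of $X_\Sigma$, restricting this identity to $V$ and using $\mu_\omega(V)=\bar F$ immediately yields $U|_F\circ(\mu_\omega|_V)=\mu_\alpha|_V$, which is the defining relation for $U_V$.
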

\begin{proof}
Just note that $\mu_{\omega}|_{V}:V\rightarrow\bar{F}$ is the moment
map of $(V,\omega_{V})$ w.r.t. the real torus action.
\end{proof}
In particular, on $V$
\begin{equation}
\alpha_{V}\wedge\frac{\omega_{V}^{p-1}}{(p-1)!}=\left(trDU|_{TF}\right)|_{\mu_{\omega}}\frac{\omega_{V}^{p}}{p!}\label{eq:trace formula on face}
\end{equation}
Note that the push-out measure of $\frac{\omega_{V}^{p}}{p!}$ by
$\mu_{\omega}|_{V}$ is $\sigma_{F}$, the canonical measure on $F$.
First we have the Lebesgue measure $\Omega$ on $M_{R}$ induced by
the lattice $M$, and let $\{u_{i}\}_{i=1}^{n-p}$ be the generators
that vanished along $TF$, then $\sigma_{F}=(\iota_{v_{1}}\cdots\iota_{v_{n-p}}\Omega)|_{TF}$,
where $\{v_{i}\}\subset M_{R}$ such that $\langle u_{i},v_{j}\rangle=\delta_{ij}$.
Integrate (\ref{eq:trace formula on face}), we have
\[
\int_{V}\alpha_{V}\wedge\frac{\omega_{V}^{p-1}}{(p-1)!}=\int_{F}trDU|_{TF}\ d\sigma_{F}
\]
In particular, take $F=\mathcal{P}$
\[
\int_{X}\alpha\wedge\frac{\omega^{n-1}}{(n-1)!}=\int_{\mathcal{P}}\sum_{i}\frac{\partial U^{i}}{\partial y^{i}}\ dy
\]

Then the condition (\ref{eq:gabor 's condition}) for the invariant
subvarieties is for any $p$-dimensional face $F$ of $\mathcal{P}$
\begin{equation}
\frac{1}{vol(F)}\int_{F}trDU|_{TF}\ d\sigma_{F}<nc=\frac{1}{vol(\mathcal{P})}\int_{\mathcal{P}}trDU\ dy\label{eq:gabor's condition toric}
\end{equation}
If (\ref{eq:gabor 's condition}) have a solution, we can derived
this directly. The solution induces a transition map $U$ such that
$trDU\equiv nc$. Since on $F$, $TF$ is invariant under $DU$, it
induces $DU|_{M_{R}/TF}:M_{R}/TF\rightarrow M_{R}/TF$, and its trace
must be positive, so

\[
trDU=trDU|_{TF}+trDU|_{M_{R}/TF}>trDU|_{TF}
\]
integrate this inequality over $F$, we get (\ref{eq:gabor's condition toric}).

\section{the Flow of Transition Maps}

Suppose $\omega_{\varphi_{t}}$ be the solution of (\ref{eq:J-flow}),
if the initial data $\omega$, $\varphi_{0}$ are invariant then $\varphi_{t}$
is always invariant. Assume $\omega$ and $\alpha$ has potential
$\phi_{0}$ and $f$ defined on $N_{R}$ respectively, then $\omega_{\varphi_{t}}$
has potential $\phi_{t}=\phi_{0}+\varphi_{t}$, denote the Legendre
transform of $\phi_{t}$ is $u_{t}$ defined on $\bar{\mathcal{P}}$,
namely the symplectic potential of $\omega_{\varphi_{t}}$. The transition
map from $\mathcal{\bar{P}}$ to $\mathcal{\bar{Q}}$ induced by $\omega_{\varphi_{t}}$
and $\alpha$ is $U_{t}$.

Fix $y\in\mathcal{P}$, assume $x_{t}\in N_{R}$ such that
\[
d\phi_{t}(x_{t})=y
\]
then
\[
u_{t}(y)=\langle x_{t},y\rangle-\phi_{t}(x_{t})
\]
\[
\frac{\pp u_{t}}{\pp t}(y)=\langle\frac{dx_{t}}{dt},y\rangle-\frac{\pp\phi_{t}}{\pp t}(x_{t})-\langle d\phi_{t},\frac{dx_{t}}{dt}\rangle=-\frac{\pp\phi_{t}}{\pp t}(x_{t})=-\frac{\pp\wifi_{t}}{\pp t}(x_{t})
\]
by this and (\ref{eq:J-flow}), (\ref{eq:trace formular}), we have

\begin{equation}
\frac{\pp u_{t}}{\pp t}=\sum f_{ij}(\nabla u_{t})u_{ij}-nc,\ on\ \mathcal{P}\label{eq:u_t j-flow equation}
\end{equation}
and
\[
\frac{\pp u_{t}}{\pp t}=\sum_{i}\frac{\partial U^{i}}{\partial y^{i}}-nc,\ on\ \mathcal{\bar{P}}
\]
since $\frac{\pp u_{t}}{\pp t}$ is smooth on $\mathcal{\bar{P}}$.

Next we consider the evolute equation of $U_{t}$, on $\mathcal{P}$

\begin{eqnarray*}
\frac{\pp U^{i}}{\pp t} & = & \frac{\pp}{\pp t}(f_{i}(\nabla u_{t}))=\sum_{j}f_{ij}(\nabla u_{t})\frac{\pp}{\pp y^{j}}(\frac{\pp u_{t}}{\pp t})\\
 & = & \sum_{j,k}f_{ij}(\nabla u_{t})\frac{\pp^{2}U^{k}}{\pp y^{j}\pp y^{k}}=\sum_{j,k}g^{ij}(U)\frac{\pp^{2}U^{k}}{\pp y^{j}\pp y^{k}}
\end{eqnarray*}
Recall that $[g^{ij}]=[\partial_{i}\partial_{j}g]^{-1}$ is smooth
on $\bar{\mathcal{Q}}$, so actually on the whole $\mathcal{\bar{P}}$,
we have

\begin{equation}
\frac{\pp U^{i}}{\pp t}=\sum_{j,k}g^{ij}(U)\frac{\pp^{2}U^{k}}{\pp y^{j}\pp y^{k}}\label{eq:evolute U}
\end{equation}
From (\ref{eq: g^ij formular}) we know exactly how $[g^{ij}]$ degenerate
at boundary, when $U\in F'$ the vector $\left(\sum_{j,k}g^{ij}(U)\frac{\pp^{2}U^{k}}{\pp y^{j}\pp y^{k}}\right)^{i}$
is located in the tangent space of $F'$, this make $U_{t}$ map $F$
to $F'$ along the flow. In particular, $g^{jk}=0$ at the vertex,
so $U_{t}$ fix all vertex.

(\ref{eq:evolute U}) is not a parabolic system, however we can use
(\ref{eq:compatible}) to modify it, recall that

\[
\sum_{k}g^{ik}(U)\frac{\pp U^{j}}{\pp y^{k}}=\sum_{k}g^{jk}(U)\frac{\pp U^{i}}{\pp y^{k}},\ on\ \mathcal{\bar{P}}
\]
thus
\begin{eqnarray}
\frac{\pp U^{i}}{\pp t} & = & \frac{\pp}{\pp y^{k}}(g^{ij}(U)\frac{\pp U^{k}}{\pp y^{j}})-(g^{ij})_{l}(U)\frac{\pp U^{l}}{\pp y^{k}}\frac{\pp U^{k}}{\pp y^{j}}\nonumber \\
 & = & \frac{\pp}{\pp y^{k}}(g^{kj}(U)\frac{\pp U^{i}}{\pp y^{j}})-(g^{ij})_{l}(U)\frac{\pp U^{l}}{\pp y^{k}}\frac{\pp U^{k}}{\pp y^{j}}\label{eq:Evolute equation parabolic}\\
 & = & g^{kj}(U)\frac{\pp^{2}U^{i}}{\pp y^{k}\pp y^{j}}-(g^{ij})_{l}(U)\frac{\pp U^{l}}{\pp y^{k}}\frac{\pp U^{k}}{\pp y^{j}}+(g^{kj})_{l}(U)\frac{\pp U^{l}}{\pp y^{k}}\frac{\pp U^{i}}{\pp y^{j}}\nonumber
\end{eqnarray}
where $(g^{ij})_{l}=\frac{\partial}{\partial y^{l}}g^{ij}$ is the
derivatives of $g^{ij}(y)$.

(\ref{eq:Evolute equation parabolic}) is a quasi-linear parabolic
system degenerated on the boundary, the equation in second row have
a nice divergence form. A direct computation show that if the solution
of (\ref{eq:Evolute equation parabolic}) satisfies (\ref{eq:compatible})
at $t=0$ then will satisfy it all the time. We already know that
(\ref{eq:Evolute equation parabolic}) has long time solution from
the origin J-flow (\ref{eq:J-flow}), the question is how $U_{t}$
will behave as time tend to infinity.

\subsection{Some Basic Estimates}
\begin{lem}
\textup{\label{lem:-is-bounded}$\inf_{t=0}tr_{\omega_{\varphi}}\alpha\leq tr_{\omega_{\varphi}}\alpha\leq\sup_{t=0}tr_{\omega_{\varphi}}\alpha$.}\end{lem}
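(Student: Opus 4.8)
The plan is to read the claim as a two-sided maximum principle for $tr_{\omega_{\varphi}}\alpha$, and to prove it on the closed manifold $X$ (where there is no boundary) rather than on $\bar{\mathcal{P}}$ (where the reduced equation degenerates at $\partial\mathcal{P}$). The starting observation is that along the flow the quantity to be controlled is exactly the flow speed: by (\ref{eq:J-flow}) we have $\dot{\varphi}=nc-tr_{\omega_{\varphi}}\alpha$, so it suffices to show that $\dot{\varphi}$ stays between its initial spatial minimum and maximum.

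First I would differentiate the flow in time. Writing $\omega_{\varphi}=\frac{\sqrt{-1}}{2\pi}g_{i\bar{j}}\,dz^{i}\wedge d\bar{z}^{j}$ with $g_{i\bar{j}}=\omega_{i\bar{j}}+\varphi_{i\bar{j}}$ and $tr_{\omega_{\varphi}}\alpha=g^{i\bar{j}}\alpha_{i\bar{j}}$, the right-hand side of (\ref{eq:J-flow}) is $G:=nc-g^{i\bar{j}}\alpha_{i\bar{j}}$. The \emph{crucial structural point} is that $G$ depends on the point $z$ (through $\alpha_{i\bar{j}}$ and $\omega_{i\bar{j}}$) and on the complex Hessian $\varphi_{i\bar{j}}$ only, never on $\varphi$ itself nor on its first derivatives, since $\omega_{\varphi}$ sees only $\partial\bar{\partial}\varphi$. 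Differentiating $\partial_{t}\varphi=G$ in $t$ and using $\partial_{t}g^{i\bar{j}}=-g^{i\bar{k}}(\dot{\varphi})_{l\bar{k}}g^{l\bar{j}}$ gives
\[
\partial_{t}\dot{\varphi}=\bigl(g^{i\bar{k}}\alpha_{i\bar{j}}g^{l\bar{j}}\bigr)(\dot{\varphi})_{l\bar{k}}=:\tilde{a}^{l\bar{k}}(\dot{\varphi})_{l\bar{k}},
\]
a linear second-order equation for $\dot{\varphi}$ with no first- or zeroth-order term. Here $\tilde{a}^{l\bar{k}}=(g_{\varphi}^{-1}\alpha\,g_{\varphi}^{-1})^{l\bar{k}}$ is a smooth Hermitian tensor that is positive definite, because $g_{\varphi}^{-1}>0$ and $\alpha>0$; hence the operator $\psi\mapsto\tilde{a}^{l\bar{k}}\psi_{l\bar{k}}$ is elliptic and the equation is uniformly parabolic on the compact $X$ over each finite time interval.

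Then I would apply the parabolic maximum principle on the closed manifold $X$ to $\dot{\varphi}$. Since the equation is homogeneous (no zeroth-order term), $\max_{X}\dot{\varphi}(\cdot,t)$ is nonincreasing and $\min_{X}\dot{\varphi}(\cdot,t)$ is nondecreasing in $t$: at a spatial maximum $(\dot{\varphi})_{l\bar{k}}$ is negative semidefinite and $\tilde{a}^{l\bar{k}}\geq0$, so $\partial_{t}\dot{\varphi}\leq0$ there, and symmetrically at a minimum. Translating back through $tr_{\omega_{\varphi}}\alpha=nc-\dot{\varphi}$, the spatial maximum of $tr_{\omega_{\varphi}}\alpha$ is nonincreasing and its spatial minimum is nondecreasing, which is exactly $\inf_{t=0}tr_{\omega_{\varphi}}\alpha\leq tr_{\omega_{\varphi}}\alpha\leq\sup_{t=0}tr_{\omega_{\varphi}}\alpha$.

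The main point to get right --- and the reason for working on $X$ rather than on $\bar{\mathcal{P}}$ --- is the absence of a zeroth-order term in the evolution of $\dot{\varphi}$; this is what makes the bound two-sided and curvature-free, and it rests entirely on the fact that the J-flow speed is a function of $\omega_{\varphi}$ alone. On the polytope side the same computation applied to $\dot{u}_{t}=trDU_{t}-nc$ (via (\ref{eq:u_t j-flow equation})) yields a linear parabolic equation $\partial_{t}\dot{u}=g^{ij}(U)\,\partial_{i}\partial_{j}\dot{u}+(\text{first order})$, but its coefficients degenerate as $\nabla u_{t}\to\infty$ near $\partial\mathcal{P}$; to run the maximum principle there one would have to check separately that the tangential operator on each face controls boundary extrema, which is precisely what the smoothness of $tr_{\omega_{\varphi}}\alpha$ on the closed $X$ lets us avoid.
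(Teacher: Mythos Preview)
Your argument is correct and coincides with the first proof the paper gives: differentiate (\ref{eq:J-flow}) in $t$ to obtain $\partial_{t}\dot{\varphi}=g_{\varphi}^{i\bar{l}}\alpha_{i\bar{j}}g_{\varphi}^{k\bar{j}}(\dot{\varphi})_{k\bar{l}}$ and apply the maximum principle on the closed manifold $X$; the paper attributes this to \cite{parabolic flow}. The paper then \emph{also} carries out the polytope-side argument you describe in your last paragraph and set aside: it applies the maximum principle to $G=trDU-\delta t$ on $\bar{\mathcal{P}}\times[0,T]$ and handles a potential boundary maximum on a face $F$ by exploiting the precise degeneration of $g^{ij}(U)$ there (namely $g^{ni}(U)=0$, $(g^{nn})_{n}(U)\geq0$ along $F=\{y^{n}=0\}$), then lets $\delta\to0$. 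So your instinct that the boundary case is the delicate point was right; the paper simply chooses to do that analysis as well, presumably to illustrate how the transition-map formulation behaves at $\partial\mathcal{P}$.
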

\begin{proof}
As in \cite{parabolic flow}, take derivative of (\ref{eq:J-flow})
w.r.t. time,
\[
\frac{\partial^{2}\varphi}{\partial t^{2}}=g_{\varphi}^{i\bar{l}}\alpha_{i\bar{j}}g_{\varphi}^{k\bar{j}}(\frac{\partial\varphi}{\partial t})_{k\bar{l}}
\]
then applied the maximal principle. We also can use (\ref{eq:evolute U}),
let $\delta>0$, suppose $G\triangleq trDU-\delta t$ at $(y_{0},t_{0})$
take the maximum value over $\mathcal{\bar{P}}\times[0,T]$. If $t_{0}>0$,
we consider the case when $y_{0}\in F^{\circ}$, $F$ is a 1-codimensional
face, the other case is similar. Choose a coordinate such that $\mathcal{P}\subset\{y^{n}\geq0\}$,
$F=\mathcal{P}\cap\{y^{n}=0\}$, then at $(y_{0},t_{0})$
\begin{eqnarray*}
0 & \leq & \frac{\partial}{\partial t}G=\sum_{p<n}\left(\frac{\partial U^{p}}{\partial t}\right)_{p}+\left(\frac{\partial U^{n}}{\partial t}\right)_{n}-\delta\\
 & = & \sum_{p,q<n}g^{pq}(U)G_{pq}+\sum_{p,q<n}\left(g^{pq}(U)\right)_{p}G_{q}\\
 &  & +\left(g^{nn}\right)_{n}(U)\frac{\partial U^{n}}{\partial y^{n}}G_{n}-\delta\\
 & \leq & -\delta
\end{eqnarray*}
Note that $[g^{pq}(U(y_{0}))]>0$, $g^{ni}(U(y_{0}))=0$, $\left(g^{nn}\right)_{n}(U(y_{0}))\geq0$,
$[G_{pq}(y_{0})]\leq0$, $G_{q}(y_{0})=0$, $G_{n}(y_{0})\leq0$,
$\frac{\partial U^{n}}{\partial y^{n}}(y_{0})>0$. It is a contradiction,
so $t_{0}=0$. Then let $\delta\rightarrow0$, we get the upper bound,
for the lower bound is similar.
\end{proof}
The above estimate gives upper bound of the eigenvalues of $DU$.
At a point, choose a basis $\{e_{i}\}$ of $T^{1,0}X$ such that $\langle e_{i},e_{j}\rangle_{\omega_{\varphi}}=\delta_{ij}$,
$\langle e_{i},e_{j}\rangle_{\alpha}=\lambda_{i}\delta_{ij}$, $\lambda_{i}>0$,
$Ae_{i}=\lambda_{i}e_{i}$, $A$ is the linear transform induced by
$\omega_{\varphi}$ and $\alpha$. From proposition \ref{prop: polynimial},
$\lambda_{i}$ is also the eigenvalue of $DU$,

\begin{equation}
\left\Vert A\right\Vert ^{2}\triangleq tr(AA^{*})=tr(A^{2})=\sum\lambda_{i}^{2}<\left(\sum\lambda_{i}\right)^{2}=(trA)^{2}\label{eq:norm of A}
\end{equation}
where $A^{*}$ is the dual transform w.r.t $\omega_{\varphi}$ or
$\alpha$. So $\left\Vert A\right\Vert ^{2}$ is bounded uniformly
along the flow, moreover on $U_{0}$, with (\ref{eq:inverse of Big phi_ij}),
(\ref{eq:  A  formula})

\[
tr(AA^{*})=A_{j}^{i}\bar{A_{l}^{k}}\Phi_{i\bar{k}}\Phi^{j\bar{l}}=A_{j}^{i}\bar{A_{l}^{k}}F_{i\bar{k}}F^{j\bar{l}}=\frac{\partial U^{j}}{\partial y^{i}}\frac{\partial U^{l}}{\partial y^{k}}g^{ik}(U)g_{jl}(U)
\]

\begin{thm}
The eigenvalues of $DU_{t}$ are positive and upper bounded uniformly.
In the interior of $\mathcal{P}$, we have a partial bound on $DU$,\textup{
\begin{equation}
\sum_{i,j,k,l}\frac{\partial U^{j}}{\partial y^{i}}\frac{\partial U^{l}}{\partial y^{k}}g^{ik}(U)g_{jl}(U)\leq C\label{eq:partial derivative estimate}
\end{equation}
and on $\bar{\mathcal{P}}$, }
\begin{equation}
\det DU\leq C'
\end{equation}

\end{thm}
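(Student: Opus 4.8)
The plan is to assemble the ingredients already in place: positivity of the eigenvalues of $DU_t$ is exactly Proposition \ref{prop: polynimial}, and every upper bound will follow from the uniform trace estimate of Lemma \ref{lem:-is-bounded} combined with elementary symmetric-function inequalities for the positive eigenvalues. There is essentially no new analysis; the content is that these bounds are uniform in $t$ and survive despite the degeneracy of (\ref{eq:Evolute equation parabolic}) on $\partial\mathcal{P}$.

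First I would record that by the trace formula (\ref{eq:trace formular}) we have $trDU_t = tr_{\omega_{\varphi_t}}\alpha$, so Lemma \ref{lem:-is-bounded} gives $trDU_t \leq C_0 := \sup_{t=0} tr_{\omega_{\varphi}}\alpha$, uniformly in $t$. This holds on all of $\bar{\mathcal{P}}$ since $trDU_t = \sum_i \partial U^i/\partial y^i$ is smooth up to the boundary. Writing $\lambda_1,\ldots,\lambda_n$ for the eigenvalues of $DU_t$, which are positive by Proposition \ref{prop: polynimial} (the characteristic-polynomial identity with $A$ holds on all of $X$ by continuation, hence up to $\partial\mathcal{P}$), the constraint $\sum_i \lambda_i = trDU_t \leq C_0$ forces each $\lambda_i \leq C_0$, giving the uniform upper bound on the eigenvalues.

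For the partial bound (\ref{eq:partial derivative estimate}) I would invoke (\ref{eq:norm of A}), namely $tr(AA^*) = \sum_i \lambda_i^2 < (\sum_i \lambda_i)^2 = (trDU_t)^2 \leq C_0^2$, and then identify the left-hand side with the expression in (\ref{eq:partial derivative estimate}) via the computation of $tr(AA^*)$ on $U_0 \cong (\mathbb{C}^*)^n$ displayed just before the statement. Since $U_0$ corresponds to the interior of $\mathcal{P}$, this yields (\ref{eq:partial derivative estimate}) with $C = C_0^2$. The restriction to the interior is unavoidable: under the Guillemin condition $g_{jl} = \partial_j\partial_l g$ blows up at $\partial\mathcal{Q}$, so the displayed identity for $tr(AA^*)$ in terms of $g^{ik}$ and $g_{jl}$ is only available where all $z_i \neq 0$.

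Finally, for $\det DU_t \leq C'$ on $\bar{\mathcal{P}}$ I would apply the arithmetic--geometric mean inequality to the positive eigenvalues, $\det DU_t = \prod_i \lambda_i \leq (\tfrac{1}{n}\sum_i \lambda_i)^n = (\tfrac{trDU_t}{n})^n \leq (C_0/n)^n =: C'$. The one step that requires care is the passage to the boundary, and this is where I would focus the write-up: I would justify it by noting that the eigenvalue identification of Proposition \ref{prop: polynimial} is valid on all of $X$ (so the $\lambda_i$ remain positive on $\partial\mathcal{P}$), and that the maximum-principle proof of Lemma \ref{lem:-is-bounded} was already carried out over $\bar{\mathcal{P}}\times[0,T]$ including the boundary faces, so the trace bound genuinely holds up to the boundary. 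Both ingredients being valid on the closed polytope, the determinant estimate extends from the interior to $\bar{\mathcal{P}}$ by continuity.
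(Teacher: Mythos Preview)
Your proposal is correct and is essentially identical to the paper's own argument: the paper derives the theorem directly from Lemma \ref{lem:-is-bounded}, Proposition \ref{prop: polynimial}, the inequality (\ref{eq:norm of A}), and the $U_0$-computation of $tr(AA^*)$ displayed just before the statement, exactly as you outline. The determinant bound via AM--GM is a minor cosmetic addition (the paper leaves it implicit), and your remarks about validity on $\bar{\mathcal{P}}$ are accurate and slightly more explicit than the paper.
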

To get the version of (\ref{eq:partial derivative estimate}) on $\partial\mathcal{P}$,
take a point $y$ in the interior of face $F$, $\dim F=p$. Since
$DU|_{y}:M_{R}\rightarrow M_{R}$ have an invariant subspace $TF$,
it induces $DU|_{M_{R}/TF}:M_{R}/TF\rightarrow M_{R}/TF$, because
$F$ is the intersection of $n-p$ $(p+1)$-dimensional face and $U$
map face to face, so $M_{R}/TF$ can be decomposed to a direct sum
of 1-dimensional invariant subspace, and the eigenvalues on these
1-dim subspace are also eigenvalues of $DU$, so they are positive
and bounded uniformly. For $DU|_{TF}:TF\rightarrow TF$, it is self-dual
and positive w.r.t. the metric on $T_{F}$, namely $D^{2}g|_{F'}$,
$F'$ is the corresponding face.

For example, $\dim F=n-2$, choose a coordinate such that $F$ is
parallel to $\{y^{n}=y^{n-1}=0\}$,
\begin{cor}
On the face $F$, the eigenvalues of $DU$ are constituted of $\frac{\partial U^{n}}{\partial y^{n}}$,
$\frac{\partial U^{n-1}}{\partial y^{n-1}}$ and eigenvalues of $DU|_{TF}$,
they are positive and upper bounded uniformly, and in the interior
of $F$,
\[
\sum_{i,j,k,l\leq n-2}\frac{\partial U^{j}}{\partial y^{i}}\frac{\partial U^{l}}{\partial y^{k}}g^{ik}(U)g_{jl}(U)\leq C
\]
\end{cor}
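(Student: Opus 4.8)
The plan is to assemble the ingredients already in place rather than to prove anything new from scratch. Proposition \ref{prop: polynimial} identifies the spectrum of $DU_t$ with that of the positive self-dual endomorphism $A$, and Lemma \ref{lem:-is-bounded} controls the trace $trDU_t = tr_{\omega_{\varphi_t}}\alpha$ uniformly in $t$. First I would diagonalize $A$ at a point, as in the discussion following Lemma \ref{lem:-is-bounded}: choose $\{e_i\}$ orthonormal for $\omega_{\varphi_t}$ with $\langle e_i,e_j\rangle_\alpha=\lambda_i\delta_{ij}$, so that the $\lambda_i>0$ are simultaneously the eigenvalues of $A$ and of $DU_t$. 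Positivity is then exactly Proposition \ref{prop: polynimial}. For the uniform upper bound, since every $\lambda_i>0$ we have $\lambda_i\le\sum_j\lambda_j=trDU_t$, and Lemma \ref{lem:-is-bounded} bounds the right-hand side by $\sup_{t=0}tr_{\omega_\varphi}\alpha=:C$ independently of $t$. This settles the first assertion.

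For the partial derivative estimate \eqref{eq:partial derivative estimate} I would invoke the coordinate computation on $U_0$ recorded just before the theorem, namely
\[
\sum_{i,j,k,l}\frac{\partial U^j}{\partial y^i}\frac{\partial U^l}{\partial y^k}g^{ik}(U)g_{jl}(U)=tr(AA^*)=\sum_i\lambda_i^2.
\]
Because all $\lambda_i$ are positive, the elementary inequality \eqref{eq:norm of A} gives $\sum_i\lambda_i^2\le\left(\sum_i\lambda_i\right)^2=(trDU_t)^2\le C^2$. I would stress that the estimate is stated only on the interior of $\mathcal{P}$ precisely because the covariant metric $g_{jl}=[g^{jl}]^{-1}$ blows up as $U$ approaches $\partial\mathcal{Q}$, so this particular coordinate expression is not controlled up to the boundary even though the invariant quantity $\sum_i\lambda_i^2$ is.

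For the determinant bound I would use $\det DU_t=\prod_i\lambda_i$ together with the arithmetic--geometric mean inequality for the positive eigenvalues,
\[
\det DU_t=\prod_i\lambda_i\le\left(\frac{1}{n}\sum_i\lambda_i\right)^n=\left(\frac{trDU_t}{n}\right)^n .
\]
The point that lets this survive on all of $\bar{\mathcal{P}}$ is that $trDU_t=\sum_i\partial U^i/\partial y^i$ extends smoothly to the boundary: as noted in deriving \eqref{eq:u_t j-flow equation} it equals $\partial u_t/\partial t+nc$, and it is bounded there by Lemma \ref{lem:-is-bounded}. On a face $F$ the eigenvalues of $DU_t$ split into those of the tangential map $DU_t|_{TF}$ and those of the induced transverse map on $M_R/TF$, all positive and summing to $trDU_t\le C$, so their product stays bounded; equivalently one may read $\det DU_t=\alpha^n/\omega_{\varphi_t}^n$ off of \eqref{eq:trace formular} directly. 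Hence $\det DU_t\le(C/n)^n=:C'$ on $\bar{\mathcal{P}}$.

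The proof is essentially bookkeeping once Lemma \ref{lem:-is-bounded} is available: the genuine analytic input, the maximum-principle trace bound, has already been carried out, and everything here is a consequence of controlling the single scalar $trDU_t$. The only place requiring real care is the boundary behavior, where one must explain why \eqref{eq:partial derivative estimate} remains merely an interior statement (degeneration of $g_{jl}$) while the determinant bound persists on $\bar{\mathcal{P}}$ (the invariant $trDU_t$ and the eigenvalue splitting along faces stay controlled). I expect this boundary bookkeeping, rather than any estimate, to be the main subtlety.
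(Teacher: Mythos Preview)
Your write-up is largely a proof of the preceding Theorem (eigenvalue bounds on $\mathcal{P}$, the interior estimate \eqref{eq:partial derivative estimate}, and the determinant bound) rather than of this Corollary, which is a statement on an $(n-2)$-dimensional face $F$. Two Corollary-specific points are not addressed.

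First, the eigenvalue decomposition. You assert that on a face the spectrum of $DU$ splits into tangential and transverse parts, but the Corollary claims more: in coordinates with $F\parallel\{y^{n-1}=y^n=0\}$ the transverse eigenvalues are \emph{precisely} the two numbers $\partial U^{n-1}/\partial y^{n-1}$ and $\partial U^n/\partial y^n$. This is not automatic from the mere invariance of $TF$; the induced map on the $2$-dimensional quotient $M_{R}/TF$ need not be diagonal a priori. The paper's argument (the paragraph just before the Corollary) is that $F$ is the intersection of the two facets $\{y^{n-1}=0\}$ and $\{y^n=0\}$, and since $U$ maps each facet to the parallel facet of $\mathcal{Q}$, the two lines $\mathbb{R}\,\partial/\partial y^{n-1}$ and $\mathbb{R}\,\partial/\partial y^{n}$ in $M_{R}/TF$ are separately invariant. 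Only with this further splitting do the diagonal entries $\partial U^{n-1}/\partial y^{n-1}$, $\partial U^n/\partial y^n$ become eigenvalues.

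Second, the displayed inequality on $F^{\circ}$ is not simply \eqref{eq:partial derivative estimate} restricted to $F$: the indices run only over $1,\dots,n-2$, and the full $g_{jl}(U)$ is singular on $F$, so one cannot just ``drop'' terms. The point, implicit in the paper, is that by Proposition \ref{trD(U|_F)  formula} the restriction $U|_F:F\to F'$ is itself the transition map for the restricted metrics $\omega_V,\alpha_V$ on the invariant subvariety $V$ corresponding to $F$, with symplectic potential $g|_{F'}$. Running the identical $tr(A_VA_V^{*})$ identification on $V$ gives exactly the sum $\sum_{i,j,k,l\le n-2}\frac{\partial U^j}{\partial y^i}\frac{\partial U^l}{\partial y^k}g^{ik}(U)g_{jl}(U)$, and then
\[
tr(A_VA_V^{*})\le (trA_V)^2=(trDU|_{TF})^2<(trDU)^2\le C^2
\]
furnishes the bound. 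Your proposal contains the right analytic input (the trace bound from Lemma \ref{lem:-is-bounded}) but does not perform this reduction to the subvariety, without which the face inequality does not follow.
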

\begin{rem}
The reason that we call (\ref{eq:partial derivative estimate}) is
just a partial bound is, for $y\in\mathcal{P}^{\circ}$, choose a
basis of $M_{R}$ such that $g_{ij}(U_{t}(y))=\mu_{i}\delta_{ij}$,
then (\ref{eq:partial derivative estimate}) is $\sum_{i,j}\left(\frac{\partial U^{j}}{\partial y^{i}}\right)^{2}\frac{\mu_{j}}{\mu_{i}}\leq C$,
but when $t\rightarrow\infty$, $U_{t}(y)$ may approach to $\partial\mathcal{Q}$,
there exists some $i,j$ such that $\frac{\mu_{j}}{\mu_{i}}$ go to
zero, so we can't bound $\frac{\partial U^{j}}{\partial y^{i}}(y)$
from (\ref{eq:partial derivative estimate}). Take $i=j$, $\left|\frac{\partial U^{i}}{\partial y^{i}}\right|$
is bounded uniformly.\end{rem}
\begin{thm}
The flow converges to a smooth solution of (\ref{eq:donaldson equation})
if and only if there exists $\delta>0$ such that $\det DU\geq\delta$
uniformly.\end{thm}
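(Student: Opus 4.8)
The plan is to prove the two implications separately, with the forward one being soft and the reverse one carrying all the analytic weight. For the forward implication, suppose the flow converges to a smooth K\"ahler metric $\omega_\infty$ solving (\ref{eq:donaldson equation}), which I read as $\omega_{\varphi_t}\to\omega_\infty$ in $C^\infty$. By the determinant identity in (\ref{eq:trace formular}) the functions $\det DU_t=\alpha^n/\omega_{\varphi_t}^n$ then converge uniformly on $\bar{\mathcal P}$ to $\det DU_\infty=\alpha^n/\omega_\infty^n$. Since $\omega_\infty$ is a genuine metric on the compact $X$, the smooth positive function $\alpha^n/\omega_\infty^n$ attains a positive minimum $2\delta_0$; uniform convergence gives $\det DU_t\geq\delta_0$ for all large $t$, while on the compact slab $\bar{\mathcal P}\times[0,T]$ positivity (Proposition \ref{prop: polynimial}) together with continuity gives a positive lower bound for finite time. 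Hence $\det DU_t\geq\delta$ uniformly.

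For the reverse implication, assume $\det DU_t\geq\delta$ uniformly. The key step, and the only place the hypothesis is used, is to promote this to a two-sided eigenvalue bound. Writing $\lambda_1,\dots,\lambda_n$ for the eigenvalues of $DU_t$, the uniform upper bound $\lambda_i\leq C$ (which follows from the positivity of the eigenvalues and the bound on $trDU_t$ in Lemma \ref{lem:-is-bounded}) together with $\prod_i\lambda_i=\det DU_t\geq\delta$ forces
\[
\frac{\delta}{C^{n-1}}\leq\lambda_i\leq C\quad\text{for all }i.
\]
By Proposition \ref{prop: polynimial} these are the eigenvalues of the relative endomorphism $A$, so the two-sided bound is precisely a uniform equivalence of metrics $C_1^{-1}\alpha\leq\omega_{\varphi_t}\leq C_1\alpha$ on $X$, that is, a uniform two-sided $C^2$ estimate independent of $t$. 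On the polytope side this says $U_t$ is uniformly bi-Lipschitz; since $U_t$ is a homeomorphism carrying $\partial\mathcal P$ onto $\partial\mathcal Q$ and fixing the vertices, it keeps each interior point a definite distance from $\partial\mathcal Q$, so the ratios $\mu_j/\mu_i$ entering the discussion of (\ref{eq:partial derivative estimate}) stay bounded and that partial bound becomes a genuine interior bound on $DU_t$.

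With the metric equivalence in hand the flow (\ref{eq:J-flow}) is uniformly parabolic on the compact manifold $X$, where there is no boundary and the apparent degeneracy of (\ref{eq:Evolute equation parabolic}) is only an artifact of the polytope coordinates; I would then run the standard regularity machinery, parabolic Evans--Krylov for a uniform $C^{2,\gamma}$ bound and Schauder bootstrapping for uniform-in-time $C^k$ bounds for every $k$, as in Song--Weinkove and Chen. For convergence, differentiating (\ref{eq:J-flow}) shows that $\dot\varphi=nc-tr_{\omega_\varphi}\alpha$ satisfies a uniformly parabolic linear equation, while the $\mathcal J$-functional is monotone,
\[
\frac{d}{dt}\mathcal J_{\alpha,\omega}(\varphi_t)=-\int_X\dot\varphi^{\,2}\,\frac{\omega_{\varphi_t}^n}{n!}\leq0,
\]
and bounded below by the uniform estimates; hence $\int_0^\infty\!\int_X\dot\varphi^{\,2}<\infty$ and, along a sequence $t_k\to\infty$, $\dot\varphi\to0$. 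The uniform $C^\infty$ bounds extract a limit $\omega_{\varphi_{t_k}}\to\omega_\infty$ in $C^\infty$ with $tr_{\omega_\infty}\alpha\equiv nc$, a smooth solution of (\ref{eq:donaldson equation}). I expect the main obstacle to be upgrading this subsequential convergence to convergence of the whole flow; I would close that gap using uniqueness of the solution of (\ref{eq:donaldson equation}) within $[\omega]$ together with the uniform estimates, equivalently a linearization argument at $\omega_\infty$ giving exponential convergence. Everything preceding that step reduces, once the eigenvalue argument supplies uniform parabolicity, to the by-now-standard a priori estimates for the J-flow.
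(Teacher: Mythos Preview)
Your proposal is correct and follows essentially the same route as the paper: the paper's proof dismisses necessity as trivial, then for sufficiency combines the uniform upper bound on the eigenvalues of $DU$ (from Lemma \ref{lem:-is-bounded}) with the assumed lower bound on $\det DU$ to get two-sided eigenvalue bounds, i.e.\ uniform equivalence of $\omega_{\varphi_t}$ and $\alpha$, and then simply cites \cite{W1} for the remaining regularity and convergence. Your write-up is just a more detailed unpacking of what that citation entails (Evans--Krylov, Schauder, monotonicity of $\mathcal J$, subsequential limits and uniqueness), but the architecture and the only substantive idea --- the eigenvalue argument --- are identical.
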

\begin{proof}
The necessity is trivial. Conversely if $\det DU\geq\delta$ uniformly,
since the eigenvalues of $DU$ is upper bounded uniformly, so is below
bounded uniformly, hence $tr_{\alpha}\omega_{\varphi}$ is bounded
uniformly from both side, then by the arguments in \cite{W1}, flow
converges to the solution of (\ref{eq:donaldson equation}).
\end{proof}
As the counterpart of Calabi's functional, we have the energy functional
$E$,

\begin{equation}
E_{\alpha}(\omega)=\frac{1}{2}\int_{X}(tr_{\omega}\alpha)^{2}\frac{\omega^{n}}{n!}=\frac{1}{2}\int_{\mathcal{P}}(trDU)^{2}\ dy\label{eq:energy}
\end{equation}

\begin{prop}
Energy functional $E$ is non-increasing along the flow.\end{prop}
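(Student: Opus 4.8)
The plan is to carry out the whole computation on the polytope side, where $E=\tfrac12\int_{\mathcal P}(trDU_t)^2\,dy$, and to show that the spatial density $h_t\triangleq trDU_t$ makes $E$ a Lyapunov functional for the flow. First I would record that, by the identity $\frac{\pp u_t}{\pp t}=\sum_i\frac{\pp U^i}{\pp y^i}-nc$ valid on $\bar{\mathcal P}$, the function $h=trDU=\frac{\pp u_t}{\pp t}+nc$ is smooth up to the boundary, so every integration by parts below is legitimate. The key observation is that the evolution equation (\ref{eq:evolute U}) collapses to a clean flux form: since
\[
\sum_k\frac{\pp^2U^k}{\pp y^j\pp y^k}=\frac{\pp}{\pp y^j}\Big(\sum_k\frac{\pp U^k}{\pp y^k}\Big)=\frac{\pp h}{\pp y^j},
\]
equation (\ref{eq:evolute U}) becomes $\frac{\pp U^i}{\pp t}=\sum_j g^{ij}(U)\,h_j$ with $h_j\triangleq\frac{\pp h}{\pp y^j}$, valid on all of $\bar{\mathcal P}$ by Proposition \ref{g^ij IS smooth over polytope}. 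Taking the $y$-divergence then gives $\frac{\pp h}{\pp t}=\sum_{i,j}\frac{\pp}{\pp y^i}\big(g^{ij}(U)h_j\big)$.

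Next I would differentiate the energy and integrate by parts over $\mathcal P$, which produces a bulk term and a boundary term:
\[
\frac{dE}{dt}=\int_{\mathcal P}h\,\frac{\pp h}{\pp t}\,dy
=\int_{\pp\mathcal P}h\sum_{i,j}g^{ij}(U)h_j\,\nu_i\,d\sigma
-\int_{\mathcal P}\sum_{i,j}h_i\,g^{ij}(U)\,h_j\,dy,
\]
where $\nu$ is the outward conormal. The bulk integral is $\ge 0$ because $[g^{ij}(U)]$ is positive (semi-)definite by Proposition \ref{g^ij IS smooth over polytope}, so the entire proposition reduces to showing that the boundary term vanishes.

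This boundary term is where the actual content lies, and it is controlled by the precise boundary degeneracy of $[g^{ij}]$ read off from (\ref{eq: g^ij formular}). On a facet $F$, adapt the coordinate so that $F=\mathcal P\cap\{y^n=0\}$ with $\mathcal P\subset\{y^n\ge 0\}$; then the outward conormal has components $\nu_i=-\delta_{in}$, and formula (\ref{eq: g^ij formular}) shows that the entire $n$-th row degenerates, i.e. $g^{nj}=0$ along $F$ (this is exactly the fact $g^{ni}(U)=0$ already exploited in the proof of Lemma \ref{lem:-is-bounded}); invariantly, on each face $[g^{ij}]$ annihilates the conormals of the facets through it. Hence $\sum_{i,j}g^{ij}(U)h_j\,\nu_i=-\sum_j g^{nj}(U)h_j=0$ on $F$, and since $\pp\mathcal P$ is the union of its facets up to a set of $(n-1)$-dimensional measure zero, the boundary integral vanishes. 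Therefore
\[
\frac{dE}{dt}=-\int_{\mathcal P}\sum_{i,j}h_i\,g^{ij}(U)\,h_j\,dy\le 0,
\]
so $E$ is non-increasing, with equality precisely when $h=trDU$ is spatially constant, i.e. at a solution of (\ref{eq:donaldson equation}).

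The main obstacle is exactly the boundary term: one must verify that the flux $g^{ij}(U)h_j$ carries no outward normal component along each facet, which is where the boundary degeneracy structure of $[g^{ij}]$ from Proposition \ref{g^ij IS smooth over polytope} and (\ref{eq: g^ij formular}) is essential. By contrast, the interior sign comes for free from positivity of $[g^{ij}]$, and the collapse of (\ref{eq:evolute U}) into flux form is routine once the simplification $\sum_k\pp^2_{y^jy^k}U^k=\pp_{y^j}(trDU)$ is noticed.
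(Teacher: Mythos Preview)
Your argument is correct and is essentially the same as the paper's: both differentiate $E$, integrate by parts once, and obtain $\frac{dE}{dt}=-\int_{\mathcal P}g^{ij}(U)\,(trDU)_i(trDU)_j\,dy\le 0$. The only cosmetic difference is the justification for the vanishing boundary term: the paper phrases it geometrically (``$\frac{\partial U}{\partial t}$ is along the face, so the divergence term is zero''), while you phrase it algebraically via $g^{nj}(U)=0$ on the facet; since $\frac{\partial U^k}{\partial t}=g^{kj}(U)h_j$, these are the same observation.
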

\begin{proof}
by (\ref{eq:evolute U})
\begin{eqnarray}
\frac{dE}{dt} & = & \int_{\mathcal{P}}trDU\frac{\partial}{\partial y^{k}}\left(\frac{\partial U^{k}}{\partial t}\right)dy\nonumber \\
 & = & \int_{\mathcal{P}}\textrm{div}\left(trDU\cdot\frac{\partial U}{\partial t}\right)dy-\int_{\mathcal{P}}\frac{\partial trDU}{\partial y^{k}}\frac{\partial U^{k}}{\partial t}\ dy\nonumber \\
 & = & -\int_{\mathcal{P}}g^{kl}(U)\frac{\partial trDU}{\partial y^{k}}\frac{\partial trDU}{\partial y^{l}}dy\leq0\label{eq:dE/dt}
\end{eqnarray}
since on the boundary of $\mathcal{P}$, $\frac{\partial U}{\partial t}$
is along the face, so the divergence term is zero.
\end{proof}
Note that we also have
\[
\frac{dE}{dt}=-\int_{\mathcal{P}}\frac{\partial trDU}{\partial y^{k}}\frac{\partial U^{k}}{\partial t}\ dy=-\int_{\mathcal{P}}g_{kl}(U)\frac{\partial U^{k}}{\partial t}\frac{\partial U^{l}}{\partial t}\ dy\leq0
\]
By \cite{parabolic flow}, we know that $E$ has the same critical
point as J-functional, namely the solution of (\ref{eq:donaldson equation}).
It is interesting to consider the variational problem

\[
\min\{\int_{\mathcal{P}}(trDU)^{2}\ dy\mid U:\bar{\mathcal{P}}\rightarrow\bar{\mathcal{Q}},\ U\ s.t.\ (\ref{eq:compatible})\}
\]

\begin{example}
\label{in-,-they  example Fang and Lai}In \cite{ansatz}, Fang and
Lai study the J-flow on the $\mathbb{P}^{n}$ blow-up 1 point under
the assumption of Calabi symmetry, namely the metrics are $U(n)$-invariant,
in our toric setting require metrics are $(\mathbb{S}^{1})^{n}$-invariant,
to make their results fit into the toric setting, we need to require
the symplectic potentials have more symmetry.
\begin{figure}
\begin{centering}
\includegraphics[bb=10bp 90bp 310bp 260bp,clip]{example.eps}
\par\end{centering}

\caption{}
\end{figure}
Let
\[
\mathcal{P}=\left\{ y\in\mathbb{R}^{n}\mid y^{i}\geq0,\ 1\leq\sum y^{i}\leq b\right\}
\]
it gives the $\mathbb{P}^{n}$ blow-up 1 point with K\"{a}hler class $[\omega]=b[E_{\infty}]-[E_{0}]$,
$E_{\infty}$ is the pull-back of hyperplane divisor by $X\rightarrow\mathbb{P}^{n}$,
$E_{0}$ is the exceptional divisor. Let $\mathcal{Q}=\left\{ y\mid y^{i}\geq0,\ 1\leq\sum y^{i}\leq a\right\} $,
it corresponds $[\alpha]=a[E_{\infty}]-[E_{0}]$. We require the symplectic
potentials have the following form, denote $B=\sum y^{i}$
\[
u=\sum y^{i}\log y^{i}-B\log B+h(B)
\]
where $h$ is a convex function defined on $[1,b]$ and satisfies
$h(B)-(B-1)\log(B-1)-(b-B)\log(b-B)$ is smooth on $[1,b]$, then
we can check that $u$ satisfies Guillemin's conditions. In the same
way, symplectic potential defined on $\mathcal{Q}$ has form $g=\sum y^{i}\log y^{i}-B\log B+\theta(B)$,
let the Legendre transform of $h$ and $\theta$ is $p$ and $\eta$
respectively, they are defined on $\mathbb{R}$. Then the induced
transition map $U$ between polytopes is
\[
U^{i}(y)=\eta'(h'(\sum y^{i}))\frac{y^{i}}{\sum y^{i}}
\]
Let $f(B)=\eta'(h'(B))$, then $f$ is a smooth function that map
$[1,b]$ to $[1,a]$, $0<f'<\infty$.

Suppose $u(y,t)$ is a solution of J-flow (\ref{eq:u_t j-flow equation}),
it preserves its form along the flow, $h$, $p$ and $f$ changes
by time. The evolute equation (\ref{eq:evolute U}) is reduced to
\[
\frac{\partial f}{\partial t}=\frac{1}{\theta''(f)}\left(\frac{\partial^{2}f}{\partial B^{2}}+(n-1)\frac{1}{B}\frac{\partial f}{\partial B}-(n-1)\frac{1}{B^{2}}f\right)
\]
Note that $nc=n\frac{ab^{n-1}-1}{b^{n}-1}$. For the limit behavior
of flow, there are three cases which be up to $nc$.\end{example}
\begin{casenv}
\item $nc>n-1$, the flow converges to a smooth solution of (\ref{eq:donaldson equation}).
$U_{t}\rightarrow U_{\infty}$ is a diffeomorphism between polytopes
satisfies $trDU_{\infty}\equiv nc$.
\item $nc=n-1$, the flow converges to a metric with conic singularity along
$E_{0}$, and is a smooth solution of (\ref{eq:donaldson equation})
on $X\backslash E_{0}$. $U_{t}\rightarrow U_{\infty}$ is a smooth
one-to-one map between polytopes but not a diffeomorphism, $\det DU_{\infty}=0$
on face $F_{0}=\{y\mid\sum y^{i}\equiv1\}$ which corresponds $E_{0}$.
$trDU_{\infty}\equiv nc$, and on $F_{0}$, $trDU_{\infty}|_{TF_{0}}=trDU_{\infty}$.
\item $nc<n-1$, the most interesting case, $\omega_{t}\rightarrow\omega_{\infty}+(\lambda-1)[E_{0}]$
is a K\"{a}hler current, $[\omega_{\infty}]=b[E_{\infty}]-\lambda[E_{0}]$,
where $\lambda\in(1,b)$ is determinate by
\[
(n-1)\frac{b}{\lambda}+\frac{\lambda^{n-1}}{b^{n-1}}=na
\]
Note that $nc'=n\frac{[\omega_{\infty}]^{n-1}\cup[\alpha]}{[\omega_{\infty}]^{n}}=n\frac{ab^{n-1}-\lambda^{n-1}}{b^{n}-\lambda^{n}}<nc$,
the above equation is equivalent to $nc'=\frac{n-1}{\lambda}$.\\
$\omega_{\infty}$ is a metric with conic singularity along $E_{0}$,
and is a smooth solution of $c'\omega^{n}=\omega^{n}\wedge\alpha$
on $X\backslash E_{0}$.\\
$U_{t}\rightarrow U_{\infty}$ is just $\mathcal{C}^{1}$ map which
squeeze the region $\{1\leq\sum y^{i}$$\leq\lambda\}$ onto the face
$\{y\mid\sum y^{i}\equiv1\}$ of $\mathcal{Q}$, so on this region
$\det DU_{\infty}=0$, and $trDU_{\infty}=\frac{n-1}{\sum y^{i}}$.
Its second derivative jump at $\{\sum y^{i}\equiv\lambda\}$.\\
$U_{\infty}$ map $\{\lambda\leq\sum y^{i}\leq b\}$ onto $\mathcal{Q}$,
in this region satisfies $trDU_{\infty}\equiv nc'$, and $\det DU_{\infty}=0$
only on $\{\sum y^{i}\equiv\lambda\}$. In this case, we see the gradient
of symplectic potential $\nabla u_{t}$ will blow up in $\{1\leq\sum y^{i}\leq\lambda\}$.
\end{casenv}
From the partial bound of derivative (\ref{eq:partial derivative estimate}),
we can prove the following property.
\begin{prop}
Suppose $y_{t}$ is path of points in $\mathcal{P^{\circ}}$, and
there exists a domain $\Omega\subset\subset\mathcal{Q}$, such that
$U_{t}(y_{t})\in\Omega$ for $t>T$, then for any domain \textup{$\Omega_{1}$
such that} $\Omega\subset\subset\Omega_{1}\subset\subset\mathcal{Q}$,
there exits $\epsilon>0$, such that $B_{\epsilon}(y_{t})\subset\mathcal{P}$,
and \textup{$U_{t}(B_{\epsilon}(y_{t}))\subset\Omega_{1}$ for $t>T$.
$B_{\epsilon}(y_{t})$ is the Euclidean ball, the distance and length
in the proof is w.r.t. the Euclidean metric.}\end{prop}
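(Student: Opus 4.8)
The plan is to upgrade the partial bound (\ref{eq:partial derivative estimate}) into a genuine, time-uniform Lipschitz bound for $U_t$ that holds precisely on the region where $U_t$ takes its values inside $\Omega_1$, and then to propagate this control outward from $y_t$ by a bounded-speed argument. First I observe that the left-hand side of (\ref{eq:partial derivative estimate}) is exactly $\|G(U)^{1/2}\,DU\,G(U)^{-1/2}\|^2$ in the Frobenius norm, where $G(U)=[g_{jl}(U)]$; this is the same quantity as $\operatorname{tr}(AA^{*})$ in (\ref{eq:norm of A}). Since $g$ is a symplectic potential, it is smooth and strictly convex on the interior of $\mathcal{Q}$, so on the compact set $\bar{\Omega}_1\subset\subset\mathcal{Q}$ there exist constants $0<\lambda_{\min}\le\lambda_{\max}$ with $\lambda_{\min}I\le G(\cdot)\le\lambda_{\max}I$. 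Hence whenever $U_t(y)\in\Omega_1$ one gets
\[
\|DU_t(y)\|\le\Bigl(\tfrac{\lambda_{\max}}{\lambda_{\min}}\Bigr)^{1/2}\sqrt{C}=:L ,
\]
a bound (in the Euclidean operator norm) independent of $t$ and of the point $y$, because the constant $C$ in (\ref{eq:partial derivative estimate}) is time-uniform. Thus $U_t$ is $L$-Lipschitz along any segment whose $U_t$-image stays in $\Omega_1$.

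Next set $d_0=\operatorname{dist}(\Omega,\partial\Omega_1)>0$ and fix any $\epsilon$ with $0<\epsilon<d_0/L$. I claim that for every $y$ with $|y-y_t|<d_0/L$ and segment $[y_t,y]\subset\bar{\mathcal{P}}$ the whole image $U_t([y_t,y])$ lies in $\Omega_1$. This is a standard continuity argument: parametrize by $\gamma(\tau)=y_t+\tau(y-y_t)$ and let $\tau^{*}=\sup\{\tau:U_t(\gamma([0,\tau]))\subset\Omega_1\}$. On $[0,\tau^{*})$ the image lies in $\Omega_1$, so the Lipschitz bound applies and the image path has length at most $L|y-y_t|<d_0$; since it starts at $U_t(y_t)\in\Omega$, which is at distance $\ge d_0$ from $\partial\Omega_1$, the image can never reach $\partial\Omega_1$. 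By continuity $U_t(\gamma(\tau^{*}))$ still lies strictly inside $\Omega_1$, so openness forces $\tau^{*}=1$, proving the claim. Note that for an interior starting point $y_t\in\mathcal{P}^{\circ}$ and any endpoint the open segment lies in $\mathcal{P}^{\circ}$ by convexity, so (\ref{eq:partial derivative estimate}) is indeed available along it.

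It remains to obtain $B_\epsilon(y_t)\subset\mathcal{P}$ uniformly in $t$, that is, a lower bound on $\operatorname{dist}(y_t,\partial\mathcal{P})$. Suppose some $y_t$ had $\operatorname{dist}(y_t,\partial\mathcal{P})<d_0/L$, and let $y_b\in\partial\mathcal{P}$ realize this distance. The segment $[y_t,y_b]$ lies in $\bar{\mathcal{P}}$ and has length $<d_0/L$, so by the previous paragraph its $U_t$-image stays in $\Omega_1$ and, extending the Lipschitz estimate to the endpoint by continuity, $U_t(y_b)$ lies within distance $<d_0$ of $U_t(y_t)\in\Omega$, hence in the open set $\Omega_1\subset\subset\mathcal{Q}$. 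But $U_t$ maps faces to faces, so $U_t(y_b)\in\partial\mathcal{Q}$, which is disjoint from $\Omega_1$ — a contradiction. Therefore $\operatorname{dist}(y_t,\partial\mathcal{P})\ge d_0/L$ for all $t>T$, and any $\epsilon<d_0/L$ satisfies both $B_\epsilon(y_t)\subset\mathcal{P}$ and, by the claim, $U_t(B_\epsilon(y_t))\subset\Omega_1$. The one genuinely delicate point is precisely that $DU_t$ carries no uniform lower bound (the flow may degenerate, $\det DU_t\to 0$), so one cannot invert $U_t$ or control $U_t^{-1}$; the argument must therefore only push forward from $y_t$ along segments, and the whole statement rests on the time-uniformity of the forward Lipschitz constant $L$ coming from (\ref{eq:partial derivative estimate}).
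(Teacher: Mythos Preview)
Your proof is correct and follows essentially the same idea as the paper: both arguments upgrade the partial bound (\ref{eq:partial derivative estimate}) to a genuine, time-uniform Euclidean Lipschitz bound $\|DU_t\|\le L$ on the set where $U_t$ takes values in $\bar\Omega_1$ (using that $[g_{ij}]$ is uniformly comparable to the identity there), and then use this to show $y_t$ stays at Euclidean distance $\gtrsim \delta/L$ from $\partial U_t^{-1}(\Omega_1)$. The only organizational difference is that the paper works directly with the nearest point $b\in\partial U_t^{-1}(\Omega_1)$ and bounds $d(y_t,b)$ from below (obtaining $B_\epsilon(y_t)\subset U_t^{-1}(\Omega_1)\subset\mathcal P$ in one stroke), whereas you run a forward continuity argument along segments from $y_t$ and then separately rule out nearby boundary points by the face-to-face property of $U_t$; your version is slightly more explicit about why the Lipschitz bound is available along the whole segment, but the content is the same.
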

\begin{proof}
assume $d(\bar{\Omega},\partial\Omega_{1})>\delta$, there exits a
point $b\in\partial U_{t}^{-1}(\Omega_{1})$ such that $d(y_{t},b)=d(y_{t},\partial U_{t}^{-1}(\Omega_{1}))$,
let $l$ be the segment located in $U_{t}^{-1}(\bar{\Omega}_{1})$
connected $y_{t}$ and $b$, since $U_{t}(U_{t}^{-1}(\bar{\Omega}_{1}))=\bar{\Omega}_{1}\subset\subset\mathcal{P}$,
by (\ref{eq:partial derivative estimate}) we know on $U_{t}^{-1}(\bar{\Omega}_{1})$
the derivatives $\sum\left(\frac{\partial U^{j}}{\partial y^{i}}\right)^{2}\leq C$
for all time, so the length of curve $\mathbf{L}(U_{t}(l))\leq C'd(y_{t},b)$,
and $U_{t}(l)$ connect $U_{t}(y_{t})\in\Omega$ and $U_{t}(b)\in\partial\Omega_{1}$,
so $\mathbf{L}(U_{t}(l))\geq\delta$. Thus $d(y_{t},\partial U_{t}^{-1}(\Omega_{1}))=d(y_{t},b)\geq\delta/C'$,
then $B_{\delta/2C'}(y_{t})\subset U_{t}^{-1}(\Omega_{1})\subset\mathcal{P}$
for $t>T$. \end{proof}
\begin{rem}
By this property, we take a point $z\in\mathcal{Q^{\circ}}$, the
inverse image $U_{t}^{-1}(z)=y_{t}$, then $d(y_{t},\partial\mathcal{P})>\epsilon$.
In particularly, the distance from the minimum point of $u_{t}$ to
$\partial\mathcal{P}$ has a uniform lower bound.
\end{rem}
Finally, we make some speculation. First, if we can prove that $y_{t}\rightarrow y_{\infty}\in\mathcal{P^{\circ}}$,
then $U_{t}(y_{\infty})\rightarrow z$, by the above proposition,
there exists $\delta>0$ such that $B_{\delta}(y_{\infty})\subset\mathcal{P^{\circ}}$
and $d(U_{t}(B_{\delta}(y_{\infty}),\partial\mathcal{Q})>c$, then
on $B_{\delta}(y_{\infty})$, we have uniform derivative bound by
(\ref{eq:partial derivative estimate}) and (\ref{eq:Evolute equation parabolic})
is strictly parabolic, we may show $U_{t}$ converges on this ball.
Then union these balls together we get a open set $\Theta\subset\mathcal{P}$,
$U_{t}\rightarrow U_{\infty}$ on $\Theta$, and $U_{\infty}(\Theta)=\mathcal{Q}$
for the arbitrariness of $z$. This means that $U_{t}$ finally squeeze
$\mathcal{P}\backslash\Theta$ onto $\partial\mathcal{Q}$, as the
case 3 in the example.

$\mathbf{Acknowledgements.}$ I would like to thank my advisor Gang
Tian for constant encouragement, Yalong Shi for many times useful
discussion and Jiaqiang Mei for his help.

\address{\textsc{Department of Mathematics and Institute of Mathematical Science,
Nanjing University, Nanjing, 210093, Jiangsu province, China} \\
E-mail address : \textsl{yeeyoe@163.com}}
\end{document}